\documentclass[12pt,twoside,a4paper]{amsart}
\usepackage[textwidth=15cm,textheight=22cm,centering]{geometry}
\usepackage[T1]{fontenc}
\usepackage{lmodern}
\usepackage{amsmath,amssymb,amsthm,mathtools}
\usepackage[hidelinks]{hyperref}
\hypersetup{pdftitle={Conditional Oka Complex Structures of the Six-Sphere},
 pdfauthor={Zhangchi Chen}}

\newtheorem{maintheorem}{Theorem}

\newtheorem{dgkxtheorem}{Theorem}

\newtheorem{theorem}{Theorem}[section]
\newtheorem{proposition}[theorem]{Proposition}
\newtheorem{lemma}[theorem]{Lemma}
\theoremstyle{definition}
\newtheorem{definition}[theorem]{Definition}
\newtheorem{question}[theorem]{Question}
\newtheorem{condition}{Condition}

\newcommand{\C}{\mathbb C}
\newcommand{\R}{\mathbb R}
\newcommand{\Z}{\mathbb Z}
\newcommand{\PP}{\mathbb P}
\newcommand{\OO}{\mathcal O}
\newcommand{\Tot}{\operatorname{Tot}}
\newcommand{\Sing}{\operatorname{Sing}}
\newcommand{\Et}{\overline{\mathcal E}}

\title{Conditional Oka Complex Structures of the Six-Sphere}
\author{Zhangchi Chen}
\address{Zhangchi Chen,
School of Mathematical Sciences,
Key Laboratory of MEA (Ministry of Education),
and Shanghai Key Laboratory of PMMP,
East China Normal University,
Shanghai 200241,
China}
\email{zcchen@math.ecnu.edu.cn}
\subjclass[2020]{Primary 32Q56; Secondary 32Q65, 32M05}
\keywords{Oka manifold, six-sphere, torus fibration}
\date{}

\begin{document}
\begin{abstract}
Alp\"oge's construction produces a period family of compact complex threefolds fibred over $\PP^1$,
with complex two-tori as general fibres.
We give a direct proof that every admissible member is Oka,
using only the complex-analytic construction.
Du--Guo--Kusakabe--Xie independently established the Oka property and determined the exact biholomorphism classes in this family.
Assuming Alp\"oge's identification of the underlying smooth manifolds with $S^6$,
these results yield uncountably many pairwise nonbiholomorphic Oka complex structures on the six-sphere.
\end{abstract}
\maketitle

\section{Introduction and main results}
\label{sec:intro}

The existence of a complex structure on the six-sphere has long attracted attention in complex geometry.
Alp\"oge has recently proposed a constructive solution by completing a family of complex two-tori over $\PP^1$ \cite{Alpoge}.
His construction leads naturally to questions about the complex-analytic properties of the resulting compact threefolds.

Oka manifolds are natural targets for entire maps $\C^n\to Y$,
and more generally for holomorphic maps from Stein manifolds:
their mapping theory allows deformation,
approximation and interpolation subject to the corresponding topological constraints \cite{ForstnericBook,ForstnericICM}.
Finding new compact Oka manifolds is an important problem in the theory.
These considerations lead to the question:
does the six-sphere admit an Oka complex structure?

Oka theory, named after Kiyoshi Oka,
originates in his 1939 work on the second Cousin problem:
on a domain of holomorphy in $\C^n$,
topological triviality of a holomorphic line bundle implies holomorphic triviality \cite{Oka1939}.
Grauert extended this principle in 1958 to principal bundles with complex Lie structure groups over Stein spaces,
proving that their holomorphic and topological classifications coincide \cite{Grauert1958}.
This is the classical Oka--Grauert principle.
The notion of an Oka manifold was already implicit in Grauert's work.
Gromov introduced holomorphic sprays and ellipticity as a sufficient geometric condition for the Oka principle,
and used the notation $\mathrm{Ell}_{\infty}$ for the Oka property \cite{Gromov,ForstnericICM}.
Forstneri\v c introduced the convex approximation property (CAP) in 2006 to unify the various Oka properties \cite{ForstnericCAP}.
In 2009 he proved that CAP also implies the parametric Oka properties and introduced the name \emph{Oka manifold} \cite[Theorem~1.1 and Definition~1.2]{ForstnericOka2009};
see \cite[Section~5.4, especially Theorem~5.4.4]{ForstnericBook} for the equivalence.
Kusakabe's 2021 characterization by the convex form of Gromov's condition $\mathrm{Ell}_1$ underlies his Zariski localization theorem \cite{Kusakabe};
this condition is now called \emph{relative convex ellipticity} \cite[Definition~1.5 and Theorems~1.6--1.7]{ForstnericICM}.

We give a direct proof that every admissible member of Alp\"oge's period family is Oka.

Let $Y_{c_0}$ denote the member of Alp\"oge's Construction~6.1 with the fixed logarithmic twists
\[
v_1=\widehat\gamma+2\widehat u-4\widehat w,\qquad v_2=-\widehat\gamma-3\widehat u+3\widehat w,
\]
and with untranslated cusp gluing.
Write $\beta=\beta^{\rm part}+c_0$,
where $\beta^{\rm part}$ is a fixed particular solution of the period transformation laws and cusp condition,
normalized by \eqref{eq:normbeta};
the superscript abbreviates ``particular'' \cite[Theorem~3.4(iii)]{Alpoge}.
Following \cite[Remark~6.4]{Alpoge} and \cite[Proposition~2.3 and the subsequent definition]{DGKX},
we call $c_0$ \emph{admissible} when the inequality $(\beta3)$ of \cite[Definition~3.1]{Alpoge} holds.
Equivalently,
in the notation of Section~\ref{sec:construction},
\begin{equation}
\label{eq:admissible}
D(\beta^{\rm part}+c_0):= \operatorname{Im}(\beta^{\rm part}+c_0) -\frac{6(\operatorname{Im}\mu)^2}{\operatorname{Im}\tau}<0 \quad\text{on }\mathfrak h_z.
\end{equation}
After the normalization \eqref{eq:normbeta},
this is equivalent to $\operatorname{Im}c_0<0$.
The source proves that every such choice yields a compact complex threefold \cite[Theorem~6.2 and Remark~6.4]{Alpoge}.

\begin{maintheorem}
\label{thm:main}
Let $Y=Y_{-i}$ be the member fixed in \eqref{eq:normbeta}--\eqref{eq:c0}.
Then $Y$ is Oka.
\end{maintheorem}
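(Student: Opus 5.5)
The plan is to use Kusakabe's Zariski localization theorem \cite{Kusakabe}: a complex manifold $Y$ is Oka as soon as every point of $Y$ has a Zariski-open neighbourhood that is Oka. The most convenient form for us is convex ellipticity, i.e.\ $\mathrm{Ell}_1$ in its relative convex version \cite[Theorems~1.6--1.7]{ForstnericICM}. So I will cover $Y$ by finitely many Zariski-open subsets $U_j$, each with a complement that is a closed analytic subset, and show each $U_j$ is Oka. In Alp\"oge's construction $Y\to\PP^1$ is a fibration whose general fibre is a complex two-torus, with finitely many special fibres over the cusp and the elliptic points. The natural choice is to let $U_0$ be the preimage of the complement of the finitely many special points in $\PP^1$, and to let the remaining $U_j$ be neighbourhoods of the special fibres, each obtained by deleting some other closed analytic subset, for instance a section or a union of fibres.

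\textbf{Step 1 (the smooth part).} Over $\mathfrak h_z$ the construction presents $U_0$, or a finite unramified quotient of it, as a quotient of $\mathfrak h\times\C^2$ by a group containing the fibrewise lattice $\Z^4$. The fibrewise translations by $\C^2$ descend to holomorphic vector fields tangent to the fibres, and these are complete. However, the base $\PP^1$ minus a few points is hyperbolic, so $U_0$ is not obviously Oka. I will therefore arrange that each $U_j$ contains whole fibres together with a non-hyperbolic transverse direction. The cleanest route is to delete the union of a few fibres only when the resulting base is $\C$ or $\C^*$; these are Oka.

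\textbf{Step 2 (local models near special fibres).} Near each special fibre the construction (via the logarithmic twists $v_1,v_2$ and the cusp gluing) gives explicit charts. There the complement of a suitable divisor, e.g.\ the singular locus of the degenerate fibre or a section, is a quotient of $\C^*\times\C^*\times\C$ or of $(\C^*)^3$ by a finite or discrete group acting by translations or monomial maps. Such quotients are Oka, since they are covered by Oka manifolds and the Oka property passes to covering spaces. The admissibility condition $\operatorname{Im}c_0<0$, equivalently \eqref{eq:admissible}, enters here: it guarantees that the gluing defines a Hausdorff compact manifold and that the cusp chart is genuinely of this algebraic-torus type.

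\textbf{Step 3 (covering $Y$).} I will check that the finitely many Zariski-open sets obtained in Steps 1--2, together with their translates under the fibrewise $\C^2$-action where this action extends, cover $Y$. Kusakabe's localization theorem then applies.

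I expect the main obstacle to be the base direction. Fibrewise translations alone never give a dominating spray, because they span only the vertical bundle. I therefore need holomorphic vector fields on each $U_j$ with horizontal components whose flows are complete, or at least sprays dominating at each point. I would first try to produce such sprays by exhibiting each $U_j$ as a holomorphic fibre bundle with Oka fibre, a torus or $(\C^*)^2$, over an Oka base ($\C$, $\C^*$ or $\PP^1$ minus one point). I would then invoke the fact that the total space of such a bundle is Oka \cite[Theorem~5.6.5]{ForstnericBook}. The delicate point is verifying, from the explicit transformation laws of $\beta=\beta^{\rm part}-i$, that after removing the chosen divisors the monodromy is trivialized enough for this bundle structure to hold near the elliptic points.
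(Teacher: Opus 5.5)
There is a genuine gap: the way you propose to produce Oka pieces cannot work. No open subset of $Y$ lying over an open set of the base is a holomorphic fibre bundle, whatever divisors you delete and however the monodromy is trivialized. Over a disc in $B^\circ$ the fibres have period matrix $\Pi=[Z\mid I]$ with $\tau$ nonconstant. A holomorphic trivialization normalized at $t_0$ lifts to complex-linear maps $L_t$ with $L_t\Pi(t_0)=\Pi(t)$; the induced map on $\Lambda$ is continuous, hence the identity. The last two columns then force $L_t=I$, so $Z$ would be constant. The moduli of the fibres vary, so ``Oka torus fibre over an Oka curve'' is unavailable, quite apart from the multiple fibres $\Sigma_1,\Sigma_2$ and the singular fibre $W$.

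Your Step~1 claim about translations also fails. $\Delta$ acts on $\zeta$ through $R_g(z)$, and the only direction fixed by both $R_{g_1}$ and $R_{g_2}$ is $\Pi\widehat\delta=(0,1)$, consistent with $h^0(Y,T_Y)=1$.

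The charts of Step~2 lie over small discs, and they fail on two counts. They are not Zariski open, since their complements have interior and so are not analytic. They are not Oka either, because $t_c$ is a nonconstant bounded holomorphic function on them. For instance, the punctured cusp chart is covered by $(\C^*)^2\times\{0<|t_c|<\epsilon\}$, not by $(\C^*)^3$.

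Finally, deleting fibres must respect the orbifold structure of the base. $Y\setminus W$ lies over $\C$ with multiplicities $3,4$ at $p_1,p_2$. Nevanlinna's ramification bound $1+\frac23+\frac34>2$ forces every entire curve in it to be vertical, so it is not Oka. This is why the paper keeps the cusp fibre and uses the cover $Y=(Y\setminus\Sigma_2)\cup(Y\setminus\Sing W)$.

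The missing idea is to change the type of the fibration by unramified covers, so that the varying moduli are absorbed into a surface base that is Oka for algebraic reasons.

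\textbf{Away from $\Sing W$.} On $Y\setminus\Sigma_2$ the cyclic base change $q=s^3$ is unramified because the logarithmic action is free. The monodromy-invariant sublattice $K=\ker\gamma$ then gives a further unramified cover $R_1$, whose regular fibres $\C^2/\Pi K$ are principal $\C^*$-bundles over $E_\tau$. The one-row opens become principal $\C^*$-bundles over the rational elliptic surface $\Et$ minus its nodes. $\Et$ is covered by two copies of the flexible Danielewski surface $\mathcal S$, namely the complements of the disjoint sections $O$ and $P$.

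\textbf{Near $\Sing W$.} Two adjacent rows give small resolutions of $uv=s^3-1$ in $\Tot(L\oplus L^{-1})$. These reduce to one-node models and, after pulling back to $L^\times$, to the scalar suspension $C_a$. That model is handled by flexibility of its smooth locus, a double cover splitting the node, and explicit incidence charts.

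Bundle invariance is thus used only with fibre $\C^*$ over such surfaces, and flexibility supplies the Oka property. Without a device of this kind, your Steps~1--3 do not produce an Oka Zariski-open cover of $Y$.
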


The same Oka argument applies to each admissible period parameter.

\begin{maintheorem}
\label{thm:family}
For every $c_0$ with $\operatorname{Im}c_0<0$,
equivalently satisfying \eqref{eq:admissible},
the complex threefold $Y_{c_0}$ with the fixed twists and cusp gluing above is Oka.
\end{maintheorem}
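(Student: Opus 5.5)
The proof of Theorem~\ref{thm:family} will follow the same scheme as Theorem~\ref{thm:main}. We first check that the parameter $c_0$ does not enter the local structure used in that argument, and then run the localization argument unchanged. Concretely, $Y_{c_0}$ is a compact threefold with a holomorphic map $\pi\colon Y_{c_0}\to\PP^1$. Over the complement $B^\circ=\PP^1\setminus\Sigma$ of the finitely many singular values, $\pi$ is a holomorphic family of complex two-tori. The period data are $(\tau,\mu,\beta)$ with $\beta=\beta^{\rm part}+c_0$. Over each point of $\Sigma$ the fibre is completed by Alp\"oge's cusp gluing, with the fixed logarithmic twists $v_1,v_2$. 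Changing $c_0$ only translates the third period by a constant, and admissibility \eqref{eq:admissible} is exactly what keeps the period lattice nondegenerate. So the smooth part of the fibration is again a family of tori $\C^2/\Lambda_z$ for $z\in\mathfrak h_z$, invariant under the same monodromy group.

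\textbf{Step 1.} Cover $Y_{c_0}$ by Zariski-open sets of the following form:
\[
U_0=\pi^{-1}(B^\circ),
\]
together with, for each cusp $p\in\Sigma$, a Zariski-open neighbourhood $U_p$ of the singular fibre $\pi^{-1}(p)$. By Kusakabe's localization theorem \cite{Kusakabe}, it suffices to show that every point has a Zariski-open neighbourhood that is Oka, or that satisfies relative convex ellipticity \cite[Definition~1.5]{ForstnericICM}.

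\textbf{Step 2.} Show that $U_0$ is Oka. The base $B^\circ$ is Oka because it is $\PP^1$ minus at most two points; if it is $\PP^1$ minus more points, we instead work over a smaller Zariski-open subset that is $\C$ or $\C^*$. Over it, $\pi$ is a holomorphic fibre bundle with Oka fibres. More precisely, it is locally the quotient of $\C^2\times\mathfrak h$ by a lattice family, and such fibrations are Oka maps. Alternatively, one can build dominating sprays from the fibrewise translations $\C^2\to\C^2/\Lambda_z$ together with a spray on the base. Either way, $U_0$ is Oka.

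\textbf{Step 3.} Handle the neighbourhoods $U_p$ of the cusp fibres, which I expect to be the main obstacle. Alp\"oge's gluing describes the neighbourhood of each cusp fibre explicitly, as a quotient of an open set in a toric-type model by the twists $v_1,v_2$. The key observation is that $c_0$ enters this chart only through a translation in the $\beta$-coordinate. Such a translation is absorbed by a biholomorphic change of chart commuting with the gluing maps, because a constant shift commutes with the cusp condition and with the transformation laws satisfied by $\beta^{\rm part}$.

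Hence the local models near the singular fibres of $Y_{c_0}$ are biholomorphic to those of $Y_{-i}$. The open sets and complete holomorphic vector fields used for Theorem~\ref{thm:main} therefore transfer verbatim. These vector fields are fibrewise torus translations extended across the cusp, together with $\C^*$-actions from the toric model. They span the tangent space on the complement of a proper analytic subset, and flowing along them gives sprays that are dominating along the singular fibre. The delicate point is verifying domination at points of the singular fibre. For this I would use Kusakabe's criterion via convex ellipticity rather than global domination: it only requires the composed sprays to be submersive at the centre, and finitely many flows achieve that.

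Combining Steps 2 and 3 with Kusakabe's localization theorem gives that $Y_{c_0}$ is Oka for every $c_0$ with $\operatorname{Im}c_0<0$.
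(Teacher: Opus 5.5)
Your overall plan (check that $c_0$ does not affect the ingredients, then rerun the argument for $Y_{-i}$) is the paper's. The argument you rerun, however, fails at Step~2, and with it the cover of Step~1. The base $B^\circ=B\setminus\{p_0,p_1,p_2\}$ is $\PP^1$ minus \emph{three} points, hence hyperbolic, and no Zariski-open subset of it is $\C$ or $\C^*$. In fact $U_0=\pi^{-1}(B^\circ)$, which is the paper's $J$, is not Oka at all. A connected Oka manifold is $\C$-connected: by interpolation, any two points lie on one entire curve. So by Picard's theorem every holomorphic map from it to $B^\circ$ is constant, whereas $\pi$ maps $J$ onto $B^\circ$. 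Nor is $J\to B^\circ$ a holomorphic fibre bundle, since $\tau$ is nonconstant, so bundle invariance could not be used anyway. The special fibres therefore cannot be split off. The paper uses only two pieces. The first is $V_1=Y\setminus\Sigma_2$: its orbifold base is $\C$ with one point of order three and the cusp fibre kept, and it becomes $\C_s$ after the unramified cubic cover $\widehat V_1\to V_1$. The second is $V_2=Y\setminus\Sing W$, which removes only a codimension-two set. Their Oka property comes from the flexible Danielewski surfaces $\mathcal S,\mathcal S_2$ over $\C_s$, principal $\C^*$-bundles over them, and the node models of Sections~\ref{sec:scalar}--\ref{sec:rows}. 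All of these are reached through unramified cyclic and lattice covers such as $R_1\to\widehat V_1\to V_1$.

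Step~3 does not repair this. Fibrewise translations and the fibre-torus action of the toric model are vertical. The $t_c$-scaling of $\mathcal Y$ does not commute with the $\Psi_{\bar\lambda}$ and does not descend. The toric model also lives only over $|t_c|<\epsilon$, which is not Zariski-open; moreover, the twists $v_1,v_2$ belong to the multiple fibres, not to the cusp. Indeed every holomorphic vector field on $V_1$ is tangent to the torus fibres, and $h^0(Y,T_Y)=1$ (Section~\ref{sec:questions}). So flows on these pieces of $Y$ cannot supply horizontal directions; in the paper those come from the flexible surfaces over $\C_s$, available only upstairs. Kusakabe's convex criterion also asks for more than submersivity at one centre: it needs a spray over every holomorphic map from a neighbourhood of a compact convex set, dominating along the whole set.

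Finally, your reduction in $c_0$ is incorrect as stated. A constant shift of $\beta$ changes the lattice $\Pi(z)\Lambda$ and in general the isomorphism class of the fibres. For example, $\C^2/\Pi(z_1)\Lambda$ contains the elliptic curve $\{0\}\times\C/(\Z+(\beta+2\mu)(z_1)\Z)$, whose modulus moves with $c_0$. So in general the shift cannot be absorbed by a fibre-preserving change of chart. The paper needs only that $\beta$ enters $\Pi$ through the $\widehat\gamma$-column. Hence the periods on $K$, the monodromy, the twists and the fan are independent of $c_0$, and the same covers and models apply. The one datum that may vary, the line bundle $L$ of Lemma~\ref{lem:strip}, is handled because Theorem~\ref{thm:twisted} holds for every holomorphic line bundle.
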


Du, Guo, Kusakabe and Xie independently proved the Oka assertion and determined the exact biholomorphism classes in this family.
We collect their results in the following theorem \cite[Theorems~1.1 and~1.4, Corollary~2.28]{DGKX}.
To compare notation,
choose their reference solution $\beta_0$ to be our normalized $\beta^{\rm part}$.
Their period function $\beta_0+a$ then equals $\beta^{\rm part}+c_0$ with $a=c_0$,
and their admissible half-plane is $\operatorname{Im}c_0<0$ \cite[Propositions~2.2--2.3]{DGKX}.
With the fixed twists and cusp gluing above,
their family is identified with ours by \cite[Lemma~2.29]{DGKX}.

\begin{dgkxtheorem}
\label{thm:DGKX}
For every $c_0$ with $\operatorname{Im}c_0<0$,
the manifold $Y_{c_0}$ is Oka,
and all these manifolds have the same smooth type.
For any two admissible parameters,
\begin{equation}
\label{eq:classification}
Y_{c_0}\simeq Y_{c_0'}
\quad\Longleftrightarrow\quad
c_0'-c_0\in2\Z.
\end{equation}
In particular, $Y_{-it}$, $t>0$, are pairwise nonbiholomorphic.
\end{dgkxtheorem}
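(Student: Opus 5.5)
Since Theorem~\ref{thm:DGKX} collects results of \cite{DGKX}, the plan is to reduce it to three separate assertions: the Oka property, constancy of the smooth type, and the classification \eqref{eq:classification}. The Oka assertion is exactly Theorem~\ref{thm:family}, so I would take it as given from the direct proof developed later in this paper. For the smooth type, I would use that the admissible set $\{\operatorname{Im}c_0<0\}$ is a connected open half-plane. The construction of $Y_{c_0}$ glues local charts whose transition data depend real-analytically on $c_0$ through $\beta=\beta^{\rm part}+c_0$, while the twists $v_1,v_2$ and the cusp gluing stay fixed. Hence the family $\{Y_{c_0}\}$ should assemble into a proper submersion over the half-plane. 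Ehresmann's theorem then makes all fibres diffeomorphic. The point to verify is that the singular-fibre and cusp charts of Construction~6.1 vary smoothly with $c_0$, which holds because \eqref{eq:admissible} is an open condition.

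For the direction ``$\Leftarrow$'' of \eqref{eq:classification}, I would look for an explicit fibrewise biholomorphism. The idea is to identify the term $c_0$ in $\beta$ with a coordinate on the torus fibres. Translating $c_0$ by $2$ should then correspond to a change of lattice basis, or a fibrewise automorphism (a shear) of the local models, that is compatible with the period transformation laws. The factor $2$ would come from the coefficient with which $\beta$ enters the lattice through the cusp normalisation \eqref{eq:normbeta}. I would check that this shear extends across the singular fibres and the cusp gluing, which are unchanged.

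The converse ``$\Rightarrow$'' is the main obstacle. My first step would be to show that any biholomorphism $Y_{c_0}\to Y_{c_0'}$ preserves the torus fibrations $Y\to\PP^1$. For this I would prove that the fibration is the algebraic reduction: a general fibre is a non-algebraic complex two-torus, so every meromorphic function on $Y$ is pulled back from $\PP^1$. A biholomorphism therefore descends to an automorphism of $\PP^1$. This automorphism must preserve the set of singular fibres and the monodromy, which should force it to be trivial or to lie in a small finite group. Next, the biholomorphism induces isomorphisms of the variations of Hodge-type period data over $\mathfrak h_z$, compatible with the monodromy. Such data determine the period function $\beta$ up to the gauge freedom allowed by the transformation laws. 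Since $\beta^{\rm part}$ is fixed by \eqref{eq:normbeta}, comparing the normalised $\beta$ for the two manifolds would leave only $c_0'-c_0\in2\Z$. I expect the hardest part to be pinning down this residual gauge group exactly, including showing that no automorphism of the base or of the fibres produces a smaller shift. Once this is done, $Y_{-it}$, $t>0$, are pairwise nonbiholomorphic, because distinct purely imaginary parameters never differ by an element of $2\Z$.
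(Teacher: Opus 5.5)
Your treatment of the Oka property (Theorem~\ref{thm:family}) and of the smooth type (a proper holomorphic submersion plus Ehresmann) matches what the paper relies on. The paper proves nothing further and cites \cite[Theorem~1.4]{DGKX} for \eqref{eq:classification}. One correction in the smooth-type step: openness of \eqref{eq:admissible} is not what controls the charts. The cusp radius in Proposition~\ref{prop:cuspfill} depends on $\sup\|\operatorname{Im}C_{c_0}\|$, which grows with $|\operatorname{Im}c_0|$. So the submersion is built over relatively compact discs, and connectedness of the half-plane finishes the argument.

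The genuine gap is in the classification. It is not only that ``$\Rightarrow$'' is left open: the mechanism you propose for the factor $2$ cannot produce it. Your lattice basis change is the right starting point. For $c\in\Z$, let $\phi_c\in\operatorname{GL}(\Lambda)$ send $\widehat\gamma\mapsto\widehat\gamma+c\widehat\delta$ and fix $\widehat u,\widehat w,\widehat\delta$. Then:
\begin{itemize}
\item $\Pi_{c_0+c}=\Pi_{c_0}\phi_c$;
\item $\phi_c$ commutes with $A_1,A_2,M_0$, because $\gamma$ is invariant and $\widehat\delta$ is fixed;
\item the $R_g$ do not involve $\beta$.
\end{itemize}
Hence $J_{c_0+c}$ and $J_{c_0}$ are literally the same torus fibration. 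The cusp fillings also coincide, since the extra factor in $c_{\bar\lambda}$ is $e^{2\pi i c\bar\lambda_1}=1$. Comparing period data, normalized $\beta$'s, or cusp data therefore gives at best $c_0'-c_0\in\Z$, and \eqref{eq:normbeta} plays no role in the factor $2$.

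What $\phi_c$ does change are the twists: $Y_{c_0+c}$ is $Y_{c_0}$ with twists $v_1+c\widehat\delta$ and $v_2-c\widehat\delta$. The class of $v_1/3$ modulo $(A_1-I)(\Lambda\otimes\R)+\Lambda$ is unchanged, since $\psi_1(\widehat\delta)/3=1$. The class of $v_2/4$, detected by $(\gamma,\psi_2)$, moves by $-\psi_2(c\widehat\delta)/4=-c/2$, which is integral exactly when $c$ is even. So the parity is carried by the logarithmic filling over $p_2$, not by the cusp or the periods.

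A proof therefore needs two steps your outline does not contain.
\begin{itemize}
\item For ``$\Rightarrow$'': first show that the base map is the identity (the three special fibres have different types) and that the induced isomorphism of period data forces $c_0'-c_0\in\Z$. Then show that no automorphism of $J$ over $B^\circ$ carries the twist and gluing data of $Y_{c_0}$ to those of $Y_{c_0+c}$ for odd $c$. Here an automorphism means a fibrewise linear map from the centralizer of the monodromy composed with a translation by a section.
\item For ``$\Leftarrow$'': exhibit, for even $c$, such an automorphism that preserves the cusp gluing and matches the modified logarithmic gluings at $p_1$ and $p_2$ globally. A local conjugacy of the affine actions is not enough.
\end{itemize}
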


Du--Guo--Kusakabe--Xie further prove that the family projection over each relatively compact parameter disc is an Oka map \cite[Theorem~1.1]{DGKX}.
We use their results without reproving them.

Theorems~\ref{thm:main} and~\ref{thm:family} do not depend on the following condition,
which we use only for the conclusions concerning $S^6$.

\begin{condition}[Alp\"oge; Du--Guo--Kusakabe--Xie]
\label{cond:sphere}
For every $c_0$ with $\operatorname{Im}c_0<0$,
the underlying smooth manifold of $Y_{c_0}$ is diffeomorphic to $S^6$.
\end{condition}

Alp\"oge's Theorem~8.1 asserts this identification,
and his Remark~6.4 specifies that the construction and proofs apply to every admissible parameter \cite{Alpoge}.
Du--Guo--Kusakabe--Xie construct a proper holomorphic submersion over each relatively compact parameter disc,
with the threefolds $Y_{c_0}$ as fibres,
and deduce constancy of their smooth type by Ehresmann's theorem \cite[Theorem~2.24 and Corollary~2.28]{DGKX}.
This argument does not use the identification with $S^6$.
After identifying Alp\"oge's construction with a member of their family \cite[Lemma~2.29]{DGKX},
they invoke his Theorem~8.1 to identify the common smooth type with $S^6$ \cite[Corollary~2.30]{DGKX}.
Their proof of the sphere identification therefore retains this input from Alp\"oge.

Under Condition~\ref{cond:sphere},
Theorem~\ref{thm:DGKX} gives uncountably many pairwise nonbiholomorphic Oka complex structures on $S^6$,
and all currently known constructions of complex structures on $S^6$ give Oka manifolds.
Within this family,
the biholomorphically invariant Oka property is constant on the fixed diffeomorphism type $S^6$ despite the variation of biholomorphism class.
In this limited sense,
the result may be viewed as an analogue of an Oka principle for these compact complex six-spheres.
Such constancy already fails in the noncompact setting:
$\C$ and the unit disc are diffeomorphic,
but only the former is Oka \cite[Section~1]{ForstnericICM}.

Our proof combines sprays constructed from complete holomorphic flows on explicit surface charts,
unramified cyclic covers and Kusakabe's localization theorem,
with the precise tools recalled in Section~\ref{sec:tools}.
A cyclic base change and an invariant-lattice cover reduce the one-row pieces to $\C^*$-bundles over explicit flexible surfaces.
The remaining two-row pieces are small resolutions of line-bundle-valued nodal suspensions.
We establish their Oka property in Sections~\ref{sec:scalar} and~\ref{sec:twisted},
then use Zariski localization to assemble the threefold.
Line-bundle twisting is handled by pullback to a principal $\C^*$-bundle.
The fixed member $Y$ is not homogeneous,
since Alp\"oge computes $h^0(Y,T_Y)=1$ \cite[Theorem~9.1(v)]{Alpoge}.

The author gratefully acknowledges the contributions on which this work rests:
Alp\"oge's construction and the smooth identification stated in \cite{Alpoge},
Gromov's spray method \cite{Gromov},
Forstneri\v c's unification of Oka theory and the permanence results used here \cite{ForstnericCAP,ForstnericOka2009,ForstnericBook},
and Kusakabe's localization theorem \cite{Kusakabe}.
Xie's work on cubic hypersurface complements develops related techniques using unramified cyclic covers and complete fibrewise Hamiltonian flows with line-bundle time parameters \cite[Sections~1 and~3]{XieCubic}.
In particular,
Du--Guo--Kusakabe--Xie have completely resolved the question about biholomorphism classes raised in Remark~1.1 and Section~10 of the \href{https://arxiv.org/abs/2609.26706v1}{first version} of this paper.
The results collected in Theorem~\ref{thm:DGKX} are entirely credited to them.
Their manuscript records that they obtained the independent Oka result in August 2026,
before our first version;
the author learned of their work from Song-Yan Xie shortly after the arXiv announcement.

We conclude in Section~\ref{sec:questions} with questions about Gromov ellipticity of $Y_{c_0}$,
the existence of complex structures on $S^6$ outside this family up to biholomorphism,
and the Oka property of any such structures.

\section{The threefold and its period parameter}
\label{sec:construction}

Let $B=\PP^1$,
with affine coordinate $t$,
and put
\[
p_1=0,\qquad p_2=1,\qquad p_0=\infty,\qquad t_c=t^{-1},\qquad B^\circ=B\setminus\{p_0,p_1,p_2\}.
\]
The orbifold $(B;3,4,\infty)$ is hyperbolic.
Its orbifold universal cover is a copy $\mathfrak h_z$ of the upper half-plane,
with deck group
\[
\Delta=\langle g_1,g_2\mid g_1^3=g_2^4=1\rangle,
\qquad g_0=(g_1g_2)^{-1}.
\]
We write $\pi:\mathfrak h_z\to B\setminus\{p_0\}$ for the quotient map,
choose fixed points $z_j$ of $g_j$,
and choose linearizing coordinates $s_j$ with
\[
s_j(z_j)=0,\qquad s_j(g_jz)=e^{-2\pi i/m_j}s_j(z),
\qquad (m_1,m_2)=(3,4).
\]
These are the standard consequences of the uniformization theorem for the triangle orbifold \cite{Beardon}.

\subsection{Lattice data and monodromy}

Let
\[
\Lambda=\langle\widehat\gamma,\widehat u,
\widehat w,\widehat\delta\rangle\simeq\Z^4.
\]
Let $(\gamma,u,w,\delta)$ be the dual basis of $\operatorname{Hom}(\Lambda,\Z)$.
In the ordered basis $(\widehat\gamma,\widehat u,\widehat w,\widehat\delta)$ of $\Lambda$,
the source monodromies are
\[
A_1= \begin{pmatrix} 1&0&0&0\\ 6&0&1&0\\ -6&-1&-1&0\\ -2&1&0&1 \end{pmatrix},\qquad A_2= \begin{pmatrix} 1&0&0&0\\ 0&0&-1&0\\ -6&1&0&0\\ 3&0&1&1 \end{pmatrix},
\]
and
\[
M_0= \begin{pmatrix} 1&0&0&0\\ 0&1&0&0\\ 0&1&1&0\\ -1&0&0&1 \end{pmatrix}.
\]
Direct multiplication gives
\[
A_1^3=A_2^4=I,\qquad A_1A_2M_0=I.
\]
Thus there is a representation $\rho_\Lambda:\Delta\to\operatorname{GL}(\Lambda)$ with $\rho_\Lambda(g_j)=A_j$ and $\rho_\Lambda(g_0)=M_0$.
Put $M_g=\rho_\Lambda(g)^{-1}$;
then $M_{gh}=M_hM_g$.
Put
\[
\Lambda_{\rm tor}=\langle\widehat w,\widehat\delta\rangle,
\qquad \overline\Lambda=\Lambda/\Lambda_{\rm tor},
\qquad K=\ker\gamma =\langle\widehat u,\widehat w,\widehat\delta\rangle.
\]
Here $\gamma:\Lambda\to\Z$ is the coordinate functional with $\gamma(\widehat\gamma)=1$ and $\gamma(\widehat u)=\gamma(\widehat w) =\gamma(\widehat\delta)=0$.
The formulas for $M_0$ give,
without any analytic input,
\[
\Lambda_{\rm tor}=\ker(M_0-I)=\operatorname{im}(M_0-I)
\]
and the induced unimodular map
\begin{equation}
B_0:\overline\Lambda\longrightarrow\Lambda_{\rm tor},
\qquad B_0\overline{\widehat\gamma}=-\widehat\delta,
\quad B_0\overline{\widehat u}=\widehat w.
\label{eq:B0}
\end{equation}
The same matrices show that $K$ is invariant under $A_1,A_2,M_0$,
that every global monodromy fixes $\widehat\delta$,
and that $K$ is the smallest monodromy-invariant subgroup containing $\Lambda_{\rm tor}$.

We fix
\begin{equation}
v_1=\widehat\gamma+2\widehat u-4\widehat w,
\qquad v_2=-\widehat\gamma-3\widehat u+3\widehat w.
\label{eq:twists}
\end{equation}
Then
\[
A_1v_1=v_1,\qquad A_2v_2=v_2,
\qquad \gamma(v_1)=1,\quad \gamma(v_2)=-1.
\]

\subsection{The period functions}

Put $\rho=e^{i\pi/3}$.
We use the modular function normalized by $j(\rho)=0$,
$j(i)=1728$,
and $j(\tau)=q^{-1}(1+O(q))$,
$q=e^{2\pi i\tau}$.
The standard facts about $j,E_4,E_6$,
and $\Delta_{\rm mod}$ used below are recalled in \cite{SerreArithmetic}.
We record the period data of \cite[Theorem~3.4]{Alpoge},
including their normalization,
to fix the coordinates used in the Oka argument.

\begin{proposition}
\label{prop:period}
There are unique holomorphic functions $\tau:\mathfrak h_z\to\mathfrak h$ and $\mu:\mathfrak h_z\to\C$,
and a nonempty affine family of holomorphic functions $\beta=\beta^{\rm part}+c_0$,
$c_0\in\C$,
satisfying
\begin{align}
\tau(g_1z)&=\frac{\tau(z)-1}{\tau(z)},& \tau(g_2z)&=-\frac1{\tau(z)},
\label{eq:tau-laws}
\\ \mu(g_1z)&=\frac{1-\mu(z)}{\tau(z)},& \mu(g_2z)&=1+\frac{\mu(z)}{\tau(z)},
\label{eq:mu-laws}
\\ \beta(g_1z)&=\beta(z)+2-\frac{6(1-\mu(z))^2}{\tau(z)},& \beta(g_2z)&=\beta(z)-3-\frac{6\mu(z)^2}{\tau(z)}.
\label{eq:beta-laws}
\end{align}
They have
\[
\tau(z_1)=\rho,\quad \tau(z_2)=i,\quad \mu(z_1)=\frac{2-\rho}{3},\quad \mu(z_2)=\frac{1-i}{2}.
\]
On a distinguished cusp lift,
\begin{equation}
e^{2\pi i\tau}=t_cu_1(t_c),\qquad u_1(0)\ne0,
\label{eq:qexp}
\end{equation}
where $u_1$ is holomorphic,
while $\mu$ and $\beta+\tau$ descend and extend holomorphically across $t_c=0$.
\end{proposition}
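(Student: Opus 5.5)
We construct the three period functions in turn, each from the previous one, following \cite[Theorem~3.4]{Alpoge}.

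\textbf{Step 1: the function $\tau$.} Take $\tau$ to be the developing map of the orbifold $(B;3,4,\infty)$ into the modular curve. Its inverse is essentially the Schwarz triangle map. Concretely, let $\tau$ be the multivalued inverse of $j/1728$ composed with a suitable degree-two-type rational map $t\mapsto j$ sending $p_1\mapsto j=0$, $p_2\mapsto j=1728$, $p_0\mapsto\infty$, with local ramification chosen so that the orbifold orders $3,4$ become the stabilizer orders $3$ of $\rho$ and $2$ of $i$ in $\mathrm{PSL}_2(\Z)$, up to the lift to $\mathrm{SL}_2$.

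Lifting to $\mathfrak h_z$ gives a single-valued holomorphic map $\tau:\mathfrak h_z\to\mathfrak h$. The transformation laws \eqref{eq:tau-laws} then express that $g_1,g_2$ act through the elliptic elements $\tau\mapsto(\tau-1)/\tau$ and $\tau\mapsto-1/\tau$, which fix $\rho$ and $i$. This forces $\tau(z_1)=\rho$ and $\tau(z_2)=i$.

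Uniqueness follows because any two solutions differ by post-composition with an automorphism of $\mathfrak h$ commuting with the image group. Since the image group is $\mathrm{PSL}_2(\Z)$ (up to finite index), whose centralizer is trivial, the two solutions agree. The cusp expansion \eqref{eq:qexp} comes from $j=q^{-1}(1+O(q))$: a simple pole of $j$ at $t_c=0$ gives $q=t_c u_1(t_c)$ with $u_1(0)\neq0$.

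\textbf{Step 2: the function $\mu$.} The laws \eqref{eq:mu-laws} say that $\mu$ is a holomorphic section of a twisted affine bundle: $\mu\mapsto(\mu+a\tau+b)/(c\tau+d)$-type cocycles with values in the lattice $\Z+\Z\tau$. Write $\mu=\mu_0+\nu$ and solve in two parts.

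First, find equivariant fixed values at the elliptic points. Setting $z=z_1$ forces $\mu(z_1)(1+1/\rho)=1/\rho$, which gives $\mu(z_1)=(2-\rho)/3$ after simplification. Similarly $\mu(z_2)=(1-i)/2$.

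Second, show that the difference of two solutions is a weight $-1$ form for the modular-type cocycle $1/(c\tau+d)$ that is bounded at the cusp. Pulling back to $B$, such a difference is a holomorphic section of a line bundle of negative degree, so it vanishes. This gives uniqueness.

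For existence, I would either write $\mu$ explicitly via Eisenstein-type quantities or use the vanishing of $H^1$ of the relevant sheaf on the compactified orbifold. The latter is a degree computation, and the obstruction vanishes by the same negativity. The same argument gives holomorphic descent of $\mu$ at the cusp, since $M_0$ acts trivially on the relevant coordinates.

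\textbf{Step 3: the function $\beta$.} The laws \eqref{eq:beta-laws} are an inhomogeneous weight-$0$ additive cocycle equation
$$\beta(gz)-\beta(z)=c_g(z),$$
where $c_g$ is built from $\tau$ and $\mu$. Its homogeneous part consists of $\Delta$-invariant holomorphic functions with controlled cusp growth, namely constants. This accounts for the affine family $\beta^{\rm part}+c_0$.

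For existence I would proceed as follows:
\begin{itemize}
\item[(a)] check the cocycle identity for $c_g$ against the relations $g_1^3=g_2^4=1$, which reduces to a computation using \eqref{eq:tau-laws}--\eqref{eq:mu-laws};
\item[(b)] check that $c_g$ vanishes suitably at the fixed points $z_j$, so that it is a coboundary locally near the orbifold points;
\item[(c)] solve globally using $H^1(B,\OO)=0$ on $\PP^1$ together with a local solution $\beta\approx-\tau$ near the cusp.
\end{itemize}
The prescribed behavior at the cusp, namely that $\beta+\tau$ descends and extends, fixes the local cusp solution. The remaining global freedom is the additive constant.

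\textbf{Main obstacle.} I expect the hardest part to be verifying the compatibility of the cocycle $c_g$ with the orbifold relations and with the cusp monodromy $g_0$. This is exactly where the specific constants $2$, $-3$ and $6$ must cancel. I would check it directly by composing the laws around $g_1^3$, $g_2^4$ and $g_1g_2$, using \eqref{eq:tau-laws}--\eqref{eq:mu-laws}.
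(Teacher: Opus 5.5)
Your Step~3 for $\beta$ is essentially the paper's argument. Steps~1 and~2, however, contain genuine gaps.

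The most serious gap is in Step~2: the decisive computation is missing, and the cohomological logic runs the wrong way. Let $\mathcal L$ be the line bundle on $B$ defined by the homogeneous law $\nu(g_1z)=-\nu(z)/\tau(z)$, $\nu(g_2z)=\nu(z)/\tau(z)$ with cusp regularity. Local solutions of \eqref{eq:mu-laws} form an $\mathcal L$-torsor. Uniqueness needs $H^0(B,\mathcal L)=0$, i.e.\ $\deg\mathcal L\le-1$. Your existence argument needs $H^1(B,\mathcal L)=0$, which holds only if $\deg\mathcal L\ge-1$; negativity gives nothing here, since $H^1(\PP^1,\OO(-2))\ne0$. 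You therefore have to prove $\deg\mathcal L=-1$ exactly, and you never compute the degree. The paper does this with $\Phi=(E_4^2E_6^{1/2}/\Delta_{\rm mod})\circ\tau$:
\begin{itemize}
\item once the sign character of the square root is shown to be trivial, $\Phi$ obeys the homogeneous law;
\item it vanishes to orders two and one at $z_1,z_2$ in the linearizing coordinates, so invariant quotients $\nu/\Phi$ have no poles there;
\item it equals $t_c^{-1}$ times a unit at the cusp.
\end{itemize}
Hence $\nu\mapsto\nu/\Phi$ identifies $\mathcal L$ with $\OO_B(-p_0)\simeq\OO(-1)$. The torsor argument also needs actual local solutions, not only the values at $z_1,z_2$. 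The paper uses $(2-\tau)/3$ near $p_1$, $(1-\tau)/2$ near $p_2$, and $0$ at the cusp.

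Step~1 is built on the wrong map. Since $\tau$ takes values in $\mathfrak h$, the function $F$ with $F\circ\pi=j\circ\tau$ is finite on $B\setminus\{p_0\}$, and \eqref{eq:qexp} makes its pole at $p_0$ simple. So $F$ has degree one, and $F(p_1)=j(\rho)=0$, $F(p_2)=j(i)=1728$ give $F=1728t$. A degree-two $F$ would need a double pole at $p_0$, i.e.\ $e^{2\pi i\tau}\sim t_c^2$.

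No ramification of $F$ is needed. $F$ lifts through the covering $j$ because a small loop around a point of $\pi^{-1}(p_1)$ maps to the cube of a meridian whose image in $\mathrm{PSL}_2(\Z)$ has order $3$. Likewise a loop around a point of $\pi^{-1}(p_2)$ maps to the fourth power of a meridian whose image has order $2$. Consequently $\tau$ is not an orbifold developing map: $\tau-i$ vanishes to order two at $z_2$, and $g_2^2$ acts trivially on $\mathfrak h$.

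You also assume rather than prove that $g_1,g_2$ act by exactly $(\tau-1)/\tau$ and $-1/\tau$. The paper first normalizes the image of $g_1$. It then shows that the image of $g_0$ is a nontrivial parabolic fixing the cusp of the component of $\{|j|>R\}$ that contains the cusp lift. Finally it solves $g_1g_2g_0=1$ with the order-two condition. Your uniqueness argument likewise needs $j\circ\tau=1728t$ first, so that two solutions are lifts of one map and differ by a deck transformation of $j$.

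The cocycle check you flag as the main obstacle is routine substitution. The real work lies in establishing $j\circ\tau=1728t$ and $\deg\mathcal L=-1$.
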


\begin{proof}
Set $F=1728(t\circ\pi)$ and remove the inverse images of $0,1728$.
The restriction of $j$ away from the elliptic orbits is a regular covering with deck group $\operatorname{PSL}_2(\Z)$.
A meridian under $F$ is a cube at $p_1$ and a fourth power at $p_2$;
its image in the deck group is therefore trivial,
since the corresponding modular stabilizers have orders three and two.
The covering criterion gives a lift $j(\tau)=F$.
Composing with a biholomorphism $\mathfrak h\simeq\mathbb D$ and applying the removable singularity theorem extends $\tau$ across the elliptic orbits.
Their local degrees force $\tau-\rho$ to have order one at $z_1$ and $\tau-i$ order two at $z_2$.
Normalize the first stabilizer to act by
\[
\mathsf S_1=\begin{pmatrix}1&-1\\1&0\end{pmatrix},\qquad \mathsf S=\begin{pmatrix}0&-1\\1&0\end{pmatrix},\qquad \mathsf T=\begin{pmatrix}1&1\\0&1\end{pmatrix},
\]
where the matrices act by fractional linear transformations.
The second stabilizer has order two in $\operatorname{PSL}_2(\Z)$.
The image $\mathsf P$ of $g_0$ is nontrivial:
if $\mathsf P=1$,
then $\tau$ would give a single-valued logarithm of the cusp parameter,
contradicting its positive winding number.
For large $R$,
every component of $\{|j|>R\}$ is trapped in a horoball at one modular cusp.
Since $\mathsf P$ preserves the component containing the cusp lift,
it fixes that cusp and is therefore parabolic.
The relation $g_1g_2g_0=1$,
together with the order-two condition on the second stabilizer,
leaves,
up to conjugation by a power of $\mathsf S_1$,
exactly $\mathsf P=\mathsf T^{-1}$ and the second stabilizer $\mathsf S$.
Thus \eqref{eq:tau-laws} holds and $\tau(g_0z)=\tau(z)-1$,
since $\mathsf S_1\mathsf S=\mathsf T$.
The centralizer of $\mathsf S_1,\mathsf S$ in $\operatorname{PSL}_2(\Z)$ is trivial,
proving uniqueness.
At the cusp,
$q=e^{2\pi i\tau}$ is single valued and bounded.
The simple pole of $j$ gives \eqref{eq:qexp}.

For $\mu$,
define the factor of automorphy
\[
\widetilde j(g_1,z)=-\tau(z),\qquad \widetilde j(g_2,z)=\tau(z).
\]
The homogeneous law $\nu(gz)=\nu(z)/\widetilde j(g,z)$,
with cusp regularity,
defines a line bundle $\mathcal L$ on $B$.
On the simply connected cover choose a square root of $E_6\circ\tau$.
The function
\[
\Phi=\left(E_4^2E_6^{1/2}/\Delta_{\rm mod}\right)\circ\tau
\]
has modular weight $-1$,
up to a sign character from the square root.
Iteration around the order-three point forces that sign to be positive there;
at the order-four point the local coordinate and the simple zero of $E_6^{1/2}\circ\tau$ force the same sign.
Hence $\Phi$ obeys the homogeneous law.
It has zeros of orders two and one over $p_1,p_2$,
and equals $t_c^{-1}$ times a unit at $p_0$.
Thus an equivariant section divided by $\Phi$ descends holomorphically at the elliptic points and vanishes at the cusp,
proving $\mathcal L\simeq\OO_B(-p_0)\simeq\OO_{\PP^1}(-1)$.
The affine local solutions of \eqref{eq:mu-laws} form an $\mathcal L$-torsor.
Local solutions are obtained by choosing a sheet away from the marked points,
by $\mu=0$ at the cusp,
and by $(2-\tau)/3$,
respectively $(1-\tau)/2$,
near $p_1,p_2$.
Since $H^1(B,\OO(-1))=H^0(B,\OO(-1))=0$,
there is exactly one global $\mu$.
Evaluation at the fixed points gives the displayed values.

Finally put
\[
\varphi_1=2-\frac{6(1-\mu)^2}{\tau},\qquad \varphi_2=-3-\frac{6\mu^2}{\tau}.
\]
Substitution using \eqref{eq:tau-laws}--\eqref{eq:mu-laws} gives
\[
\sum_{k=0}^{2}\varphi_1\circ g_1^k=0,\qquad \sum_{k=0}^{3}\varphi_2\circ g_2^k=0.
\]
Hence \eqref{eq:beta-laws} is locally solvable:
at $p_j$ take $m_j^{-1}\sum_{k=0}^{m_j-1}k(\varphi_j\circ g_j^k)$,
at the cusp take $-\tau$,
and elsewhere prescribe a value on one sheet.
The local solutions form an $\OO_B$-torsor.
Since $H^1(B,\OO_B)=0$,
global solutions exist,
and $H^0(B,\OO_B)=\C$ shows that they differ by one constant.
A direct composition gives
\[
(\tau,\mu,\beta)\circ(g_1g_2)=(\tau+1,\mu,\beta-1),
\]
so $\mu$ and $\beta+\tau$ are cusp invariant;
the imposed local regularity gives their holomorphic extensions.
\end{proof}

\subsection{Normalization, nondegeneracy, and cusp periods}

For a particular solution $\beta^{\rm part}$,
put
\[
D(\beta)=\operatorname{Im}\beta -\frac{6(\operatorname{Im}\mu)^2}{\operatorname{Im}\tau}.
\]

\begin{proposition}
\label{prop:periodlattice}
The function $D(\beta^{\rm part})$ is $\Delta$-invariant,
bounded above,
and tends to $-\infty$ at the cusp.
There is a unique normalization
\begin{equation}
\operatorname{Re}\beta^{\rm part}(z_2)=0,
\qquad \sup_{\mathfrak h_z}D(\beta^{\rm part})=0.
\label{eq:normbeta}
\end{equation}
With this normalization the admissible constants are exactly those with $\operatorname{Im}c_0<0$.
For this normalization set
\begin{equation}
\beta^*=\beta^{\rm part}-i.
\label{eq:c0}
\end{equation}
Then $D(\beta^*)\le-1$,
and
\begin{equation}
\Pi(z)= \begin{pmatrix} 6\mu(z)&\tau(z)&1&0\\ \beta^*(z)&\mu(z)&0&1 \end{pmatrix}
\label{eq:Pi}
\end{equation}
maps $\Lambda\otimes\R$ isomorphically onto $\C^2$.
Moreover there are invertible holomorphic matrices $R_g$ satisfying
\[
\Pi(gz)=R_g(z)\Pi(z)M_g.
\]
At the cusp,
\begin{equation}
\Pi=[sB_0+C(t_c)\mid I],\qquad e^{2\pi i s}=t_c,
\label{eq:cuspperiod}
\end{equation}
with $C$ holomorphic at $t_c=0$.
For every admissible $c_0$,
the lattice,
equivariance,
and cusp-form assertions hold with $\beta^*$ replaced by $\beta_{c_0}=\beta^{\rm part}+c_0$:
the period matrix $\Pi_{c_0}$ defines a family of compact complex two-tori,
is equivariant under the same monodromy,
and has the same cusp form with $C$ replaced by a holomorphic $C_{c_0}$.
\end{proposition}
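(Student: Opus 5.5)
Compute directly how $\operatorname{Im}\beta$, $\operatorname{Im}\mu$, $\operatorname{Im}\tau$ transform under the generators, using \eqref{eq:tau-laws}--\eqref{eq:beta-laws}. For $g_2$ we have $\operatorname{Im}(-1/\tau)=\operatorname{Im}\tau/|\tau|^2$ and $\operatorname{Im}(1+\mu/\tau)=\operatorname{Im}(\mu\bar\tau)/|\tau|^2$. Moreover,
\[
\operatorname{Im}\beta(g_2z)=\operatorname{Im}\beta-6\operatorname{Im}(\mu^2/\tau)=\operatorname{Im}\beta-6\operatorname{Im}(\mu^2\bar\tau)/|\tau|^2 .
\]
Hence $D(\beta^{\rm part})$ is invariant provided
\[
(\operatorname{Im}\mu)^2|\tau|^2-(\operatorname{Im}(\mu\bar\tau))^2=\operatorname{Im}\tau\,\operatorname{Im}(\mu^2\bar\tau),
\]
which is a polynomial identity in real and imaginary parts and can be checked directly. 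The case $g_1$ is the same after the substitution $\mu\mapsto 1-\mu$, and a real additive constant in the $\beta$-law does not change $\operatorname{Im}$. Since $g_1,g_2$ generate $\Delta$, this gives invariance, so $D$ descends to a continuous function on $B\setminus\{p_0\}$.

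At the cusp, Proposition~\ref{prop:period} says that $\mu$ and $\beta+\tau$ are bounded near $t_c=0$, so $\operatorname{Im}\beta=-\operatorname{Im}\tau+O(1)$. Also $\operatorname{Im}\tau=-\frac{1}{2\pi}\log|t_c|+O(1)\to+\infty$ by \eqref{eq:qexp}, and $(\operatorname{Im}\mu)^2/\operatorname{Im}\tau\to0$. Thus $D\to-\infty$. Compactness of $B$ then gives boundedness above, and the supremum is attained.

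For the normalization, note that $D(\beta^{\rm part}+c)=D(\beta^{\rm part})+\operatorname{Im}c$. A real shift fixes $\operatorname{Re}\beta^{\rm part}(z_2)=0$ and an imaginary shift fixes $\sup D=0$, both uniquely. Admissibility \eqref{eq:admissible} then reads $D(\beta^{\rm part})+\operatorname{Im}c_0<0$ everywhere. Because the supremum $0$ is attained, this holds exactly when $\operatorname{Im}c_0<0$. In particular $D(\beta^*)=D(\beta^{\rm part})-1\le-1$.

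For the lattice statement, $\Pi$ maps $\Lambda\otimes\R$ isomorphically onto $\C^2$ iff the $4\times4$ real matrix $(\Pi,\overline\Pi)$ is invertible. Row and column reduction, removing the identity block, reduces this to the determinant of the imaginary part of the $2\times2$ block
\[
\begin{pmatrix}6\mu&\tau\\ \beta&\mu\end{pmatrix}.
\]
That determinant is $6(\operatorname{Im}\mu)^2-\operatorname{Im}\tau\operatorname{Im}\beta=-\operatorname{Im}\tau\cdot D(\beta)$, up to a positive factor and sign. So it is nonzero whenever $D<0$, and this argument works uniformly for every admissible $c_0$.

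The equivariance $\Pi(gz)=R_g\Pi(z)M_g$ only needs checking on $g_1,g_2$, since $M_{gh}=M_hM_g$ and $R_{gh}$ is built by composing. For each generator, define $R_g$ as the matrix that restores the identity block on the last two columns, and compare the remaining entries with \eqref{eq:tau-laws}--\eqref{eq:beta-laws}. I expect this bookkeeping with the explicit $A_j$ to be the main obstacle, mostly sign and convention errors; the constants $2$ and $-3$ in the $\beta$-law should match the entries $-2$ and $3$ in the last rows of $A_1,A_2$.

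At the cusp, choose the branch $s=\tau$, so $e^{2\pi i s}=e^{2\pi i\tau}$, which equals $t_c$ up to the unit $u_1$. One can either absorb $u_1$ into a holomorphic shift of $s$, or set $s=\frac{1}{2\pi i}\log t_c$ and put $\tau-s$, which is holomorphic, into $C$. Writing the first block as
\[
\begin{pmatrix}6\mu&\tau\\ \beta&\mu\end{pmatrix}=\tau\begin{pmatrix}0&1\\-1&0\end{pmatrix}+\begin{pmatrix}6\mu&0\\ \beta+\tau&\mu\end{pmatrix},
\]
the first term is $sB_0$ by \eqref{eq:B0} plus a holomorphic correction. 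The second term is holomorphic at $t_c=0$ by Proposition~\ref{prop:period}. Replacing $\beta^*$ by $\beta_{c_0}$ only changes $C$ by a constant entry, which gives the uniform statement for all admissible $c_0$.
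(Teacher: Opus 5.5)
Your proposal is correct. Apart from one step, it follows the paper's proof:
\begin{itemize}
\item the lattice property via $\det\operatorname{Im}Z=-\operatorname{Im}\tau\,D(\beta)$;
\item the cusp estimate from boundedness of $\mu$ and $\beta+\tau$;
\item the normalization by independent real and imaginary shifts;
\item the cusp decomposition. Your second option, $s=\tau-h$ with $h=(2\pi i)^{-1}\log u_1$ and $hB_0$ absorbed into $C$, is exactly the paper's $C=\begin{psmallmatrix}6\mu&h\\ \beta+\tau-h&\mu\end{psmallmatrix}$.
\end{itemize}

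The genuine difference is how you obtain $\Delta$-invariance of $D$. You check it directly from the transformation laws via the identity
$(\operatorname{Im}\mu)^2|\tau|^2-(\operatorname{Im}\mu\bar\tau)^2=\operatorname{Im}\tau\,\operatorname{Im}(\mu^2\bar\tau)$.
This identity does hold: with $\mu=a+ib$ and $\tau=c+id$, both sides equal $d^2(b^2-a^2)+2abcd$. The $g_1$ case does reduce to it under $\mu\mapsto1-\mu$.

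The paper instead proves equivariance first and then takes real determinants in $\Pi(g_jz)=R_{g_j}(z)\Pi(z)M_{g_j}$. It uses $\det_{\R}\Pi=\operatorname{Im}\tau\,D(\beta)$, $|\det R_{g_j}|^2=|\tau|^{-2}$, $\operatorname{Im}\tau(g_jz)=\operatorname{Im}\tau/|\tau|^2$, and implicitly $\det A_j=1$, which does hold. Invariance then drops out.
\begin{itemize}
\item The paper's route is more economical and explains why the identity holds: $D$ is the real determinant of the period map divided by $\operatorname{Im}\tau$, and both transform by $|\tau|^{-2}$. So one formula serves both the lattice and the invariance claims.
\item Your route needs neither the monodromy matrices nor the equivariance, at the cost of a polynomial verification.
\end{itemize}

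The equivariance bookkeeping you postponed does go through. Your recipe (take $R_g$ to restore the identity block) produces exactly the paper's matrices $R_{g_1}=\begin{psmallmatrix}-1/\tau&0\\(1-\mu)/\tau&1\end{psmallmatrix}$ and $R_{g_2}=\begin{psmallmatrix}1/\tau&0\\-\mu/\tau&1\end{psmallmatrix}$. These have determinants $\mp1/\tau\neq0$. As you predicted, the entries $-2$ and $3$ in the last rows of $A_1,A_2$ cancel the constants $2$ and $-3$ in the $\beta$-laws. Because $\Pi(z)M_g$ has complex rank two, $R_g$ is uniquely determined, so the cocycle automatically respects the relations of $\Delta$.

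One wording fix: the stacked matrix $\binom{\Pi}{\overline\Pi}$ is complex, not real. Alternatively use the real matrix with rows $\operatorname{Re}\Pi$ and $\operatorname{Im}\Pi$; its invertibility is what is equivalent to $\Pi(z)\Lambda$ being a full lattice.
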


\begin{proof}
For the generators,
direct multiplication of the matrices in the preceding subsection gives the equivariance with
\[
R_{g_1}=\begin{pmatrix}-1/\tau&0\\(1-\mu)/\tau&1\end{pmatrix},
\qquad R_{g_2}=\begin{pmatrix}1/\tau&0\\-\mu/\tau&1\end{pmatrix}.
\]
Both are invertible;
the cocycle rule extends them to $\Delta$.
Viewing $\Pi$ as a real map and writing $Z=\left(\begin{smallmatrix}6\mu&\tau\\\beta&\mu\end{smallmatrix}\right)$,
one obtains
\begin{equation}
\det_{\R}\Pi=-\det\operatorname{Im}Z =\operatorname{Im}\tau\,D(\beta).
\label{eq:real-det}
\end{equation}
Taking real determinants in the two equivariance identities and using
\[
|\det R_{g_j}|^2=|\tau|^{-2},\qquad \operatorname{Im}\tau(g_jz)=\frac{\operatorname{Im}\tau}{|\tau|^2},
\]
shows that $D$ is invariant.
If $b=\beta+\tau$,
then at the cusp
\[
D\le\operatorname{Im}b-\operatorname{Im}\tau\longrightarrow-\infty.
\]
It therefore has finite supremum on the quotient.
Independent real and imaginary translations of $\beta^{\rm part}$ give the unique normalization \eqref{eq:normbeta}.
The supremum is attained because $D$ is continuous on the punctured compact base and tends to $-\infty$ at the cusp.
Since $D(\beta^{\rm part}+c_0)=D(\beta^{\rm part})+ \operatorname{Im}c_0$,
the strict inequality holds everywhere exactly when $\operatorname{Im}c_0<0$;
subtracting $i$ gives $D(\beta^*)\le-1$.
Equation \eqref{eq:real-det} now proves that $\Pi(z)\Lambda$ is a full lattice in $\C^2$ for every $z$.

For any admissible $c_0$,
the same determinant calculation has $D(\beta_{c_0})<0$.
Adding a constant to $\beta$ changes neither the transformation laws nor the extension of $\beta+\tau$ at the cusp;
the equivariance and cusp calculations therefore apply with $\Pi_{c_0}$ and $C_{c_0}$ as well.

Choose a branch $h=(2\pi i)^{-1}\log u_1$ and put $s=\tau-h$.
Since $\mu$ and $b=\beta^*+\tau$ are holomorphic in $t_c$,
\[
C(t_c)=\begin{pmatrix}6\mu(t_c)&h(t_c)\\ b(t_c)-h(t_c)&\mu(t_c)\end{pmatrix}
\]
is holomorphic at zero,
and substitution gives \eqref{eq:cuspperiod}.
\end{proof}

The lattice family $\mathcal T=(\mathfrak h_z\times\C^2)/\Lambda$,
with translation by $\Pi(z)\lambda$,
is therefore a holomorphic family of compact two-tori.
The equivariance gives a $\Delta$-action.
Away from the two elliptic orbits it is free and properly discontinuous,
so
\[
J=\mathcal T|_{\pi^{-1}(B^\circ)}/\Delta\longrightarrow B^\circ
\]
is a proper holomorphic torus submersion.

\begin{proposition}
\label{prop:weierstrass}
The elliptic quotient with period lattice $\Z+\tau\Z$ has relatively minimal Weierstrass model
\begin{equation}
y^2=4x^3-3t^3(t-1)x-t^4(t-1)^2,
\label{eq:weierstrass}
\end{equation}
and the displayed equation has the section $P=(0,it^2(t-1))$.
After the cubic base change
\[
t=\frac{s^3}{s^3-1},\qquad x=\frac{s^4X}{(s^3-1)^2},\qquad y=\frac{s^6Y}{(s^3-1)^3},
\]
it becomes $Y^2=4X^3-3sX-1$,
and $P=(0,i)$.
\end{proposition}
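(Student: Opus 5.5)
Both assertions come from the $j$-function normalization fixed in Proposition~\ref{prop:period}: $j(\tau)=1728\,t$, $j(\rho)=0$ over $p_1$, $j(i)=1728$ over $p_2$, and a simple pole of $j$ at the cusp $t=\infty$. I would first pin down the Weierstrass model as a rational function identity, then check relative minimality, the section, and the base change by direct substitution.

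\emph{The Weierstrass model.} For $y^2=4x^3-g_2x-g_3$ the invariant is $j=1728\,g_2^3/(g_2^3-27g_3^2)$. With
\[
g_2=3t^3(t-1),\qquad g_3=t^4(t-1)^2,
\]
we get $g_2^3=27t^9(t-1)^3$ and $27g_3^2=27t^8(t-1)^4$. Hence
\[
g_2^3-27g_3^2=27t^8(t-1)^3\bigl(t-(t-1)\bigr)=27t^8(t-1)^3,
\]
and so $j=1728\,t$, matching $j(\tau)=1728\,t$. A curve with this $j$ is determined only up to quadratic twist, so the $j$-identity alone does not fix the model; I would fix the twist by local monodromy.

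\emph{Local monodromy and minimality.} The monodromy of $\Z+\tau\Z$ is read off from \eqref{eq:tau-laws}: order three at $p_1$ (via $\mathsf S_1$), order four at $p_2$ (via $\mathsf S$), and $\mathsf T^{\pm1}$ at the cusp. I would compare these with Tate's table applied to $(\operatorname{ord}g_2,\operatorname{ord}g_3,\operatorname{ord}\Delta)$:
\begin{itemize}
\item at $t=0$ the orders are $(3,4,8)$, giving Kodaira type $IV^*$, whose monodromy has order three;
\item at $t=1$ they are $(1,2,3)$, giving type $III$, whose monodromy has order four;
\item at $t=\infty$, in $t_c$ with weights $4,6,12$, we have $g_2$ of degree $4$ and $g_3$ of degree $6$ in $t$, so after multiplying by $t_c^{4},t_c^{6},t_c^{12}$ the coefficients become units and $\operatorname{ord}\Delta=12-11=1$, giving type $I_1$ with monodromy $\mathsf T$.
\end{itemize}
Since $4\cdot 1+6\cdot 2=16$, the degrees are consistent with a rational elliptic surface. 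Minimality holds because no point has $\operatorname{ord}g_2\ge4$ and $\operatorname{ord}g_3\ge6$ simultaneously.

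\emph{Fixing the twist.} A quadratic twist ramified at two of the three points would change $IV^*$ to $IV$ (monodromy still of order three, but of the opposite sign), $III$ to $III^*$, and $I_1$ to $I_1^*$. The cusp type $I_1$ is forced by $\tau(g_0z)=\tau(z)-1$, and the elliptic types must agree with the matrices $\mathsf S_1,\mathsf S$ as elements of $\operatorname{SL}_2$, not just $\operatorname{PSL}_2$. I expect this to be the main obstacle. The $\operatorname{SL}_2$-lifts are fixed by the homological monodromy on $\langle\widehat u,\widehat w\rangle$ in $A_1,A_2,M_0$. The central block of $M_0$ is unipotent, which confirms $I_1$ and rules out $I_1^*$. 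Together with the product relation, this leaves only the twist ramified at both $p_1,p_2$; that twist would change the cusp parity, so it is excluded as well. So I would argue that the cusp type determines the twist.

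\emph{Section and base change.} At $x=0$ the right side is $-t^4(t-1)^2=(it^2(t-1))^2$, so $P=(0,it^2(t-1))$ lies on the curve. For the base change, $t=s^3/(s^3-1)$ gives $t-1=1/(s^3-1)$. Substituting $x,y$ and multiplying through by $(s^3-1)^6/s^{12}$, every term reduces:
\begin{itemize}
\item $4x^3$ becomes $4X^3$;
\item $3t^3(t-1)x=3s^{13}X/(s^3-1)^6$ becomes $3sX$;
\item $t^4(t-1)^2=s^{12}/(s^3-1)^6$ becomes $1$.
\end{itemize}
This gives $Y^2=4X^3-3sX-1$. The section $x=0$ gives $X=0$, and $y=it^2(t-1)=is^6/(s^3-1)^3$ gives $Y=i$.
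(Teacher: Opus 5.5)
Your $j$-invariant computation, the Kodaira types $IV^*,III,I_1$ read off from the vanishing orders, the minimality check, the section, and the base-change substitution are all correct. They are exactly the computations the paper makes. The paper then simply asserts that the local monodromies agree with \eqref{eq:tau-laws}.

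The gap is in the step you flag as the main obstacle, fixing the quadratic twist. As written it is wrong in two places.

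First, a twist branched at $p_1$ does not turn $IV^*$ into $IV$. The orders $(3,4,8)$ become $(5,7,14)$, which reduce to $(1,1,2)$, so the fibre becomes type $II$. Its monodromy is $-1$ times an element of order three, and therefore has order six. An element ``of order three but of opposite sign'' cannot exist.

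Second, the twist branched at $\{p_1,p_2\}$ does not touch the cusp at all. It also respects the product relation, because the two sign changes cancel: $(-A_1)(-A_2)M_0=A_1A_2M_0$. Concretely, $y^2=4x^3-3t(t-1)^3x-t(t-1)^5$ is a relatively minimal rational model with $j=1728t$ and fibres $II,III^*,I_1$ (Euler numbers $2+9+1=12$). So the cusp type does not determine the twist, and your closing argument cannot distinguish this model from \eqref{eq:weierstrass}.

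The repair uses data you already named.

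\emph{At $p_1$.} The elliptic quotient has $H_1=K/\langle\widehat\delta\rangle=\langle\widehat u,\widehat w\rangle$. Modulo $\widehat\delta$, $A_1$ acts on it by $\widehat u\mapsto-\widehat w$ and $\widehat w\mapsto\widehat u-\widehat w$. This has trace $-1$ and order three, consistent with $A_1^3=I$. That rules out type $II$, so $p_1$ is not a branch point of the twist.

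\emph{At $p_0$ and in $B^\circ$.} The unipotence of $M_0$ on the same quotient excludes $p_0$. The twist cannot be branched in $B^\circ$, since the family is smooth there.

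\emph{At $p_2$.} A double cover of $\PP^1$ has an even number of branch points, so $p_2$ is excluded as well.

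Separately, the line ``$4\cdot1+6\cdot2=16$'' is meaningless. The relevant consistency checks are $\deg g_2\le4$, $\deg g_3\le6$, and the Euler count $8+3+1=12$.

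With this correction, your proof is the paper's argument with the twist determination made explicit.
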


\begin{proof}
For \eqref{eq:weierstrass},
\[
\Delta_W=27t^8(t-1)^3,
\qquad j=1728\frac{(3t^3(t-1))^3}{\Delta_W}=1728t.
\]
The vanishing orders give Kodaira fibres $IV^*,III,I_1$ at $0,1,\infty$,
respectively \cite{BHPV}.
Their local monodromies agree with \eqref{eq:tau-laws};
uniqueness of the normalized modular lift and of the relatively minimal extension identifies the elliptic family.
The point $P$ satisfies the equation,
and the base-change assertion is direct substitution.
\end{proof}

\subsection{The toroidal cusp filling}

Identify $\overline\Lambda$ and $\Lambda_{\rm tor}$ with $\Z^2$ in the bases already chosen.
Let $\mathcal T_{A_2}$ be the triangulation of $\R^2$ with vertices $\Z^2$,
cut out by $y_1\in\Z$,
$y_2\in\Z$,
$y_1+y_2\in\Z$.
In $N'=\Z^2\oplus\Z$,
let $\mathcal F$ consist of the cones over its cells at height one,
together with the zero cone,
and let $\mathcal Y$ be the associated toric threefold.
The two types of maximal cone have determinants $+1$ and $-1$;
hence $\mathcal Y$ is smooth \cite{Fulton}.
On each maximal chart $U_\sigma\simeq\C^3$ there are coordinates with
\begin{equation}
t_c=z_0z_1z_2.
\label{eq:toriclocal}
\end{equation}
Thus $t_c^{-1}(0)$ is a reduced normal-crossings divisor.

Translation of the height-one fan by $B_0\bar\lambda$ induces a toric automorphism $\Phi_{\bar\lambda}$.
For $|t_c|<r$,
define
\begin{equation}
\Psi_{\bar\lambda}(x,t_c)= \bigl(c_{\bar\lambda}(t_c)t_c^{B_0\bar\lambda}x,t_c\bigr),
\qquad c_{\bar\lambda}=e^{2\pi iC(t_c)\bar\lambda}.
\label{eq:Psi}
\end{equation}
Equivalently,
on all of $\mathcal Y$,
first apply $\Phi_{\bar\lambda}$ and then multiply by $(c_{\bar\lambda}(t_c),1)$ in the acting torus.

The cusp filling is the one proved in \cite[Theorem~4.5]{Alpoge};
the following calculation records the properness estimate used here.
\begin{proposition}
\label{prop:cuspfill}
For sufficiently small $\epsilon>0$,
the action $\bar\lambda\mapsto\Psi_{\bar\lambda}$ on $\mathcal Y_\epsilon=\{|t_c|<\epsilon\}$ is free and properly discontinuous.
Its quotient $N_0=\mathcal Y_\epsilon/\overline\Lambda$ is a connected Hausdorff complex threefold,
proper over the cusp disc,
with reduced normal-crossings central fibre $W$.
The map
\begin{equation}
E_0^{\exp}(z,\zeta)= (e^{2\pi i\zeta_1},e^{2\pi i\zeta_2},e^{2\pi is(z)})
\label{eq:Eexp}
\end{equation}
induces a biholomorphism $G_0:J|_{0<|t_c|<\epsilon}\to N_0\setminus W$.
\end{proposition}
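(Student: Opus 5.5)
The plan is to use the cusp form \eqref{eq:cuspperiod} to pass to logarithmic coordinates, and then to study the lattice action on the toric threefold $\mathcal Y$ fan-theoretically.

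\textbf{Logarithmic coordinates.} Over a punctured cusp neighbourhood, write points of $J$ as $(z,\zeta)$ with $\zeta\in\C^2$, taken modulo $\Pi(z)\Lambda$ and modulo the cusp deck transformation $g_0$. By \eqref{eq:cuspperiod} we have $\Pi(z)\lambda=(sB_0+C)\bar\lambda+\lambda_{\rm tor}$, where $\lambda_{\rm tor}\in\Lambda_{\rm tor}\simeq\Z^2$ and $I$ acts as the identity block. Exponentiating the $\zeta$-coordinates kills the translations by $\Lambda_{\rm tor}$. The same exponential sends the translation by $(sB_0+C)\bar\lambda$ to multiplication by $e^{2\pi i sB_0\bar\lambda}e^{2\pi iC\bar\lambda}=t_c^{B_0\bar\lambda}c_{\bar\lambda}(t_c)$, which is exactly \eqref{eq:Psi}. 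The third coordinate $e^{2\pi i s}=t_c$ identifies the base. Passing from $z$ to $s$ quotients by $g_0$, and the equivariance $\Pi(g_0z)=R_{g_0}\Pi M_{g_0}$ is absorbed into $\overline\Lambda$. So $E_0^{\exp}$ descends to a holomorphic bijection from $J|_{0<|t_c|<\epsilon}$ onto $(\C^*)^2\times\{0<|t_c|<\epsilon\}/\overline\Lambda$, and $(\C^*)^2\times\{t_c\ne0\}$ is the open torus orbit of $\mathcal Y_\epsilon$. Away from $t_c=0$ the action is just translation by the full-rank lattice $\operatorname{Im}(sB_0+C)\bar\lambda$ in logarithmic coordinates, since $\operatorname{Im}s\to\infty$ and $B_0$ is unimodular. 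Hence it is free and properly discontinuous there, and the map is a biholomorphism.

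\textbf{Action on the central fibre.} Here I would use the fan structure. $\Phi_{\bar\lambda}$ translates the height-one triangulation $\mathcal T_{A_2}$ by $B_0\bar\lambda\in\Z^2$. The vertices, and all cells, of the central fibre $W$ are therefore permuted freely by $\overline\Lambda$, because a nonzero translation fixes no cell. This gives freeness on $t_c=0$: a fixed point would lie in a torus orbit corresponding to a cell, which would then have to be translated to itself. For proper discontinuity, the quotient of the cells of $\mathcal T_{A_2}$ by $\Z^2$ is finite: one vertex, three edges and two triangles, giving the torus $\R^2/\Z^2$. So I would choose a compact fundamental set $\mathcal K\subset\mathcal Y_{\bar\epsilon}$, namely the union of the closures of finitely many chart pieces $\{|z_i|\le 1\}$ over a fundamental domain of cells, adapted to the moment-map-type function $\operatorname{Log}:\mathcal Y\setminus W\to\R^2$. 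The key estimate is that only finitely many $\bar\lambda$ satisfy $\Psi_{\bar\lambda}\mathcal K\cap\mathcal K\neq\emptyset$. In valuation coordinates $\operatorname{Log}|x|/\log|t_c|^{-1}$, the map $\Psi_{\bar\lambda}$ shifts by $B_0\bar\lambda$ up to an error $O(\|\bar\lambda\|\,\|C\|_\infty/\log\epsilon^{-1})$. Shrinking $\epsilon$ makes this error smaller than a fixed fraction of $\|B_0\bar\lambda\|$, uniformly in $\bar\lambda$ because the error is linear in $\bar\lambda$. This uniform estimate is the main obstacle: the factor $c_{\bar\lambda}$ stays bounded near $t_c=0$ only linearly in $\bar\lambda$, and the estimate must also hold on $W$, where the valuation is not defined. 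On $W$ I would instead argue chart by chart, using \eqref{eq:toriclocal} and that $\Psi_{\bar\lambda}$ maps the chart $U_\sigma$ to $U_{\sigma+B_0\bar\lambda}$ with bounded coordinate distortion.

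\textbf{Conclusions.} Hausdorffness and the complex structure of $N_0$ follow from the free, properly discontinuous action. Properness over the disc follows because $\mathcal K$ surjects onto $N_0$ and is compact over $|t_c|\le\epsilon'$. Connectedness holds because $\mathcal Y_\epsilon$ is connected. The central fibre $W$ is the quotient of the reduced normal-crossings divisor $t_c^{-1}(0)$ by a free action, so it is again reduced with normal crossings.
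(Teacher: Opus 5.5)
Your proposal is correct and follows the paper's proof essentially step for step. The shared ingredients are: valuation coordinates $y=\log|x|/\log|t_c|$, in which $\Psi_{\bar\lambda}$ shifts by $B_t\bar\lambda$ with $B_t=B_0+\mathcal R(t_c)/\log|t_c|$ made uniformly close to $B_0$ by shrinking $\epsilon$; free translation of cells on $W$; a compact fundamental set made of finitely many closed toric polydiscs; and descent of $E_0^{\exp}$ through \eqref{eq:cuspperiod}.

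Two small corrections. First, because $B_t\Z^2\neq\Z^2$, polydiscs over a single fundamental square need not contain a representative of every orbit. You should take all triangles within a fixed radius instead, as the paper does by choosing $\bar\lambda$ with $\|B_t^{-1}y(p)+\bar\lambda\|_\infty\le 1/2$, which gives $|y|<\sqrt2$. Second, the $g_0$-equivariance is absorbed by $\Lambda_{\rm tor}$, not by $\overline\Lambda$, since $R_{g_0}=I$ and $\operatorname{im}(M_0-I)=\Lambda_{\rm tor}$.
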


\begin{proof}
Put $\mathcal R(t_c)=-2\pi\operatorname{Im}C(t_c)$.
Choose $\epsilon$ so small that
\[
|\log\epsilon|\ge2\sup_{|t_c|\le\epsilon}\|\mathcal R(t_c)\|,
\qquad B_t=B_0+\frac{\mathcal R(t_c)}{\log|t_c|}
\]
satisfies
\begin{equation}
\|B_t-B_0\|\le\frac12,\qquad |B_t\bar\lambda|\ge\frac12|\bar\lambda|.
\label{eq:Bt}
\end{equation}
On the dense torus,
a fixed point would give $\log|t_c|B_0\bar\lambda+\mathcal R(t_c)\bar\lambda=0$,
contrary to \eqref{eq:Bt}.
On the central fibre,
$\Psi_{\bar\lambda}$ translates the cell indexing a toric orbit by $B_0\bar\lambda$;
a nonzero translation cannot preserve a bounded cell.
The action is free.

For a torus point set $y=\log|x|/\log|t_c|$.
If a translate of one fixed toric chart meets another,
the corresponding rescaled positions satisfy
\[
y(\Psi_{\bar\lambda}p)=y(p)+B_t\bar\lambda.
\]
The enlarged triangles belonging to two fixed charts are bounded,
while \eqref{eq:Bt} bounds $|\bar\lambda|$.
Only finitely many translates can therefore meet two prescribed charts,
and compact sets meet only finitely many charts.
This proves proper discontinuity.

It remains to prove properness over the disc.
Given a torus point,
choose $\bar\lambda\in\Z^2$ so that,
for $u=B_t^{-1}y(p)$,
$\|u+\bar\lambda\|_\infty\le1/2$.
Then
\[
|y(\Psi_{\bar\lambda}p)| \le\frac32\frac{\sqrt2}{2}<\sqrt2.
\]
Thus every orbit has a representative in finitely many closed toric polydiscs.
On the central fibre the same conclusion follows by translating a vertex to zero,
using $B_0\overline\Lambda=\Z^2$.
Over every smaller closed disc these polydiscs form a compact fundamental set.
Hence the quotient is Hausdorff and proper;
\eqref{eq:toriclocal} gives the asserted central fibre.

Finally \eqref{eq:Eexp} is a surjective local biholomorphism onto the dense torus over the punctured disc.
Its fibres are the orbits generated by $g_0$ and $\Lambda_{\rm tor}$,
and \eqref{eq:cuspperiod} shows that it intertwines the residual lattice action with \eqref{eq:Psi}.
It therefore descends to $G_0$.
\end{proof}

\subsection{The logarithmic fillings}

Choose small $g_j$-invariant discs $\Delta_j\ni z_j$,
with the linearizing coordinates $s_j$,
and put
\[
\mathcal T_j=(\Delta_j\times\C^2)/\Lambda.
\]
The period equivariance makes
\begin{equation}
\widetilde g_j^{\log}(z,\zeta)= \left(g_jz,R_{g_j}(z)\zeta+ \Pi(g_jz)\frac{v_j}{m_j}\right)
\label{eq:logaction}
\end{equation}
a holomorphic automorphism of $\mathcal T_j$.

The logarithmic filling is \cite[Theorem~5.4]{Alpoge} with our fixed twists.
We recall the freeness check because it is also needed for the unramified base change.
\begin{proposition}
\label{prop:logfill}
For $j=1,2$,
the automorphism $\widetilde g_j^{\log}$ generates a free group of order $m_j$.
The quotient $N_j=\mathcal T_j/\langle\widetilde g_j^{\log}\rangle$ is a complex threefold proper over $D_j=\Delta_j/\langle g_j\rangle$.
On the punctured disc,
translation by
\begin{equation}
\sigma_j(z)=\frac{\log s_j(z)}{2\pi i}\Pi(z)v_j
\label{eq:logsection}
\end{equation}
conjugates the logarithmic action to the untwisted action and induces a biholomorphism $G_j:N_j\setminus f_j^{-1}(p_j)\to J|_{D_j^*}$.
\end{proposition}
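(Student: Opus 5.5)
Three things must be checked for Proposition~\ref{prop:logfill}: that $\widetilde g_j^{\log}$ is a well-defined automorphism of order exactly $m_j$ on $\mathcal T_j$, that it acts freely, and that $\sigma_j$ conjugates it to the untwisted action on the punctured disc.

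\emph{Order.} Using $\Pi(gz)=R_g(z)\Pi(z)M_g$ and the cocycle rule for $R$, the $k$-th iterate has the form
\[
(z,\zeta)\mapsto\Bigl(g_j^kz,\;R_{g_j^k}(z)\zeta+\tfrac1{m_j}\sum_{l=1}^{k}\Pi(g_j^kz)M_{g_j}^{-(l-1)}v_j\Bigr),
\]
where the exponent bookkeeping comes from $M_{gh}=M_hM_g$. Since $A_jv_j=v_j$, every summand equals $\Pi(g_j^kz)v_j/m_j$, so the translation part is $\tfrac{k}{m_j}\Pi(g_j^kz)v_j$. For $k=m_j$ we have $g_j^{m_j}=1$ and $R_{1}=I$, and the translation is $\Pi(z)v_j$ with $v_j\in\Lambda$. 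Hence the $m_j$-th iterate is the identity on $\mathcal T_j$.

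\emph{Freeness.} A fixed point of a nontrivial power $0<k<m_j$ must lie over the fixed point $z_j$, where $g_j^k z_j=z_j$. There the automorphism of the torus $\C^2/\Pi(z_j)\Lambda$ is $\zeta\mapsto R\zeta+\tfrac{k}{m_j}\Pi(z_j)v_j$, where $R$ corresponds to the lattice map $M_{g_j}^k$. A fixed point exists if and only if $\tfrac{k}{m_j}v_j$ lies in $\operatorname{im}(I-M_{g_j}^k)\otimes\R$ modulo $\Lambda$. Apply the functional $\gamma$: every monodromy preserves $K=\ker\gamma$, and the first rows of $A_1,A_2$ are $(1,0,0,0)$, so $\gamma\circ A_j=\gamma$ and $\gamma$ annihilates $\operatorname{im}(I-M)$. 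Therefore a fixed point would force $\tfrac{k}{m_j}\gamma(v_j)\in\Z$. But $\gamma(v_j)=\pm1$, which gives a contradiction. This $\gamma$-obstruction is the key point; the only care needed is that $\gamma$ is well defined modulo $\Lambda$ on real combinations. Because the group is finite, free and acts holomorphically, the quotient $N_j$ is a complex manifold. It is proper over $D_j$ because $\mathcal T_j\to\Delta_j$ is proper and the quotient map is finite.

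\emph{Conjugation.} Under $g_j$ we have $\log s_j(g_jz)=\log s_j(z)-2\pi i/m_j$ for a compatible branch on the universal cover of the punctured disc. Then
\[
\sigma_j(g_jz)=R_{g_j}\sigma_j(z)-\tfrac1{m_j}\Pi(g_jz)v_j,
\]
again using $M_{g_j}v_j=v_j$. Hence translation by $\sigma_j$ intertwines $\widetilde g_j^{\log}$ with the linear action $\zeta\mapsto R_{g_j}\zeta$. Changing the branch of the logarithm alters $\sigma_j$ by $\Pi v_j\in\Pi\Lambda$, so the map is well defined on $\mathcal T_j$. It therefore descends to a biholomorphism $G_j$ onto $J|_{D_j^*}$: away from $z_j$ the untwisted quotient is exactly $J$ by definition, and the inverse is translation by $-\sigma_j$.

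The main obstacle is the index bookkeeping in the iterate formula and the sign convention for $s_j$; I would verify both on the generator $k=1$ first, where the computation reduces to the stated equivariance identity.
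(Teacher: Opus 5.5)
Your proposal is correct and follows the paper's route: the order comes from $A_jv_j=v_j$ and $v_j\in\Lambda$, freeness from the non-integrality of an $A_j$-invariant functional on $\tfrac{k}{m_j}v_j$, and the conjugation from $\sigma_j$, with branch changes absorbed into $\Pi\Lambda$. The differences are cosmetic: you iterate in complex coordinates via the cocycle rather than in flat real coordinates, you use only the single functional $\gamma$ in the necessary direction (which is all the paper's freeness argument actually uses; its $\psi_j$ are not needed), and the linear part at $z_j$ is $A_j^k=M_{g_j}^{-k}$ rather than $M_{g_j}^k$, a harmless slip since $\gamma$ is invariant under both.
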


\begin{proof}
In the flat real coordinates $\mathcal T_j\simeq\Delta_j\times(\Lambda\otimes\R)/\Lambda$,
the action reads
\[
(z,x)\longmapsto(g_jz,A_jx+v_j/m_j).
\]
Since $A_jv_j=v_j$,
its $k$-th power translates by $(k/m_j)v_j$ after applying $A_j^k$;
its $m_j$-th power is translation by the lattice vector $v_j$,
hence the identity.
No smaller power is the identity because it covers $g_j^k$.

For completeness,
a finite-order affine torus map $x\mapsto Ax+b$ has a fixed point exactly when every integral $A$-invariant functional $\phi$ has $\phi(b)\in\Z$.
Indeed,
the condition is $b\in(A-I)(\Lambda\otimes\R)+\Lambda$,
and averaging over $\langle A\rangle$ identifies the annihilator lattice with the invariant functionals.
Direct calculation from $A_1,A_2$ gives,
for all relevant nontrivial powers,
invariant lattices generated by $\gamma$ and respectively
\[
\psi_1=2u+w+3\delta,\qquad \psi_2=u+w+2\delta.
\]
But
\[
\gamma(kv_1/3)=k/3\quad(k=1,2),\qquad \gamma(kv_2/4)=-k/4\quad(k=1,2,3)
\]
is never integral.
Hence the action is free.

A finite free holomorphic action has an unramified complex-manifold quotient \cite[Ch.~V]{GrauertRemmert}.
The torus family $\mathcal T_j\to\Delta_j$ is proper and $\Delta_j\to D_j$ is finite,
so the quotient map to $D_j$ is proper.
Branches of $\log s_j$ change \eqref{eq:logsection} by the period $\Pi(z)v_j$,
so $\sigma_j$ is a well-defined section.
Finally,
$s_j(g_jz)=e^{-2\pi i/m_j}s_j(z)$,
$R_{g_j}\Pi=\Pi(g_jz)A_j$,
and $A_jv_j=v_j$ give
\[
h_j\widetilde g_j^{\log}h_j^{-1}=\widetilde g_j^0,
\]
where $h_j$ is translation by $\sigma_j$.
Descent gives $G_j$.
\end{proof}

\subsection{The compact threefold}

The fillings $N_j$ and gluing maps $G_j$ are those of Propositions~\ref{prop:cuspfill} and~\ref{prop:logfill}.

\begin{definition}
\label{def:Y}
For each admissible $c_0$,
choose the period matrix $\Pi_{c_0}$,
the twists \eqref{eq:twists},
and the untranslated cusp gluing.
Let $J_{c_0}$,
$N_{j,c_0}$,
and $G_{j,c_0}$ be the corresponding torus family,
fillings,
and gluing maps described above.
Define
\begin{equation}
Y_{c_0}=\bigl(J_{c_0}\sqcup N_{0,c_0}\sqcup N_{1,c_0} \sqcup N_{2,c_0}\bigr)/\!\sim,
\label{eq:Y}
\end{equation}
where $\sim$ identifies the corresponding punctured-disc parts by $G_{0,c_0},G_{1,c_0},G_{2,c_0}$.
Thus the fixed gluing invariants are $(\ell_0,\ell_1,\ell_2)=(0,1,-1)$.
We abbreviate $Y=Y_{-i}$ for the normalized choice \eqref{eq:c0}.
\end{definition}

Fix once and for all one allowable set of auxiliary radii and round linearizing discs supplied by Propositions~\ref{prop:cuspfill} and~\ref{prop:logfill}.
The proof below is uniform in that choice.
Two harmless ambiguities disappear directly from the formulas:
changing a branch in \eqref{eq:cuspperiod} adds an integral multiple of $B_0$ and leaves \eqref{eq:Psi} unchanged,
while changing $\log s_j$ adds a lattice translation and leaves the induced map on the torus quotient unchanged.
No independence assertion about auxiliary choices is needed for the existence or Oka theorem for this fixed $Y$.

The compactness statement is also \cite[Theorem~6.2]{Alpoge};
we include the gluing argument to make clear that the period parameter introduces no new topological hypothesis.
\begin{proposition}
\label{prop:compact}
For every admissible $c_0$,
the space $Y_{c_0}$ is a compact connected Hausdorff complex manifold of dimension three.
\end{proposition}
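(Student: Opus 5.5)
The proof will check, in order, that $Y_{c_0}$ is a complex manifold, that it is Hausdorff, that it is compact, and that it is connected. We use the map to $B$. The pieces $J_{c_0}$, $N_{0,c_0}$, $N_{1,c_0}$, $N_{2,c_0}$ are complex threefolds. For $J_{c_0}$ this follows from Proposition~\ref{prop:periodlattice}: the lattice is full for every admissible $c_0$, and $\Delta$ acts freely and properly discontinuously away from the elliptic orbits. The fillings are handled by Propositions~\ref{prop:cuspfill} and~\ref{prop:logfill}, whose proofs use only the cusp form \eqref{eq:cuspperiod} and the equivariance, and these hold for $\Pi_{c_0}$. The gluing maps $G_{0,c_0}$, $G_{1,c_0}$, $G_{2,c_0}$ are biholomorphisms between open subsets. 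Each one covers the identity of the corresponding punctured disc in $B$: for $G_0$ through $e^{2\pi i s}=t_c$, and for $G_j$ because translation by $\sigma_j$ is fibrewise. So the projections $J_{c_0}\to B^\circ$ and $N_{j,c_0}\to D_j$ glue to a holomorphic map $f:Y_{c_0}\to B$. There are no triple overlaps among the $N_j$, since the discs $D_0,D_1,D_2$ are chosen disjoint. Hence the identifications generate an equivalence relation without extra cocycle conditions, and the quotient is locally biholomorphic to $\C^3$.

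The main step is the Hausdorff property. A gluing of manifolds along open sets is Hausdorff exactly when the graph of each gluing map is closed in the product, and here that follows from properness over the base. Take $p\in N_{j,c_0}$ and $q\in J_{c_0}$ that cannot be separated. Their images in $B$ cannot be separated either, so $f(p)=f(q)$ lies in $D_j^*$. Then $p$ lies in the glued region, which means $p$ and $q$ define the same point or can be separated inside $J_{c_0}$. Points over distinct base points are separated by preimages of disjoint discs, and two points in one chart are separated within that chart. More formally: if $x_n\to p$ in $N_{j,c_0}$, $G_j(x_n)\to q$ in $J_{c_0}$, and $f(p)=f(q)\in D_j^*$, then by continuity $G_j(p)=q$. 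The case $f(p)=p_j$ cannot occur, since then $f(q)\notin B^\circ$, which is impossible for $q\in J_{c_0}$. The one delicate point is to shrink the discs so that the closed discs $\overline{D_j}$ stay disjoint, and to use the Hausdorffness of $N_0$ from Proposition~\ref{prop:cuspfill}.

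For compactness we use that $f$ is proper. Cover $B$ by closed smaller discs $\overline{D_j'}\subset D_j$ around $p_0,p_1,p_2$ together with the compact set $B\setminus\bigcup_j D_j'$, which is contained in $B^\circ$ after a suitable choice of radii. Over each closed disc, $f^{-1}$ is compact by properness of $N_{j,c_0}$ over $D_j$. Over the complementary compact set, $f^{-1}$ is compact because $J_{c_0}\to B^\circ$ is a proper torus submersion. So $Y_{c_0}$ is a finite union of compact sets. For connectedness, the fibres of $J_{c_0}$ are connected tori and $B^\circ$ is connected, so $J_{c_0}$ is connected. It is also dense in $Y_{c_0}$, because each central fibre of $N_{j,c_0}$ lies in the closure of its punctured part: $N_0$ is connected with $W$ a divisor, and $N_j$ is a quotient of $\Delta_j\times T$. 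The dimension is three throughout. Since $\Pi_{c_0}$ enters only through estimates already established uniformly in admissible $c_0$, chiefly \eqref{eq:Bt}, the argument is uniform in $c_0$.
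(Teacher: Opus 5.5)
Your proposal is correct and follows essentially the same route as the paper: the four pieces embed as open sets via the gluing biholomorphisms over disjoint discs, points are separated either through the base or inside one Hausdorff piece, compactness comes from properness of $f$ over the compact base, and connectedness goes through $J_{c_0}$ (you use density of $J_{c_0}$, whereas the paper uses that each connected filling meets $J_{c_0}$; the two are equivalent here). The closing remark about uniformity in $c_0$ is inaccurate but harmless: the radius $\epsilon$ in \eqref{eq:Bt} depends on $\sup\|\operatorname{Im}C_{c_0}\|$, which is unbounded over the half-plane $\operatorname{Im}c_0<0$, and no uniformity is needed because $c_0$ is fixed throughout.
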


\begin{proof}
Fix an admissible $c_0$,
abbreviate the corresponding pieces by $J,N_0,N_1,N_2$ and their gluing maps by $G_0,G_1,G_2$,
and take the three filling discs pairwise disjoint.
Every nonsingleton equivalence class in \eqref{eq:Y} consists of one point in $J$ and one point in exactly one $N_j$.
Since each $G_j$ is a biholomorphism between open sets,
saturation preserves openness.
The four summands therefore embed as open sets in the quotient,
and their given complex structures form a three-dimensional complex atlas.

Points over distinct base points are separated by inverse images of disjoint base neighbourhoods.
Points over the same base point lie together in one of the Hausdorff charts $J,N_0,N_1,N_2$;
hence $Y$ is Hausdorff.
The map $f:Y\to B$ induced by the four proper local maps is proper because properness is local on the base for locally compact Hausdorff spaces.
Since $B$ is compact,
$Y=f^{-1}(B)$ is compact.
Finally,
all four pieces are connected and every filling meets $J$,
so $Y$ is connected.
\end{proof}

Write $\Sigma_j=(f^{-1}(p_j))_{\mathrm{red}}$ for the two reduced multiple fibres,
$j=1,2$,
and $W=f^{-1}(p_0)$ for the toroidal fibre.

\section{Oka tools}
\label{sec:tools}

A dominating spray on a complex manifold $M$ is a holomorphic map $s:E\to M$ from a holomorphic vector bundle over $M$ such that $s(0_x)=x$ and $d s_{0_x}(E_x)=T_xM$ for every $x\in M$.
A manifold admitting such a spray is called \emph{Gromov elliptic}.
Gromov proved that a manifold admitting such a spray is Oka \cite{Gromov}.
Finitely many complete holomorphic vector fields spanning every tangent space give such a spray by composing their flows.
This is how the explicit vector fields below prove the required Oka assertions.

We also use the following results.
Here a Zariski-open set means the complement of a closed analytic subvariety.
\begin{enumerate}
\item \emph{Localization.} If every point has a Zariski-open Oka neighbourhood,
the manifold is Oka (Kusakabe \cite{Kusakabe}).
\item \emph{Coverings and bundles.} The Oka property is invariant in both directions under unramified holomorphic coverings \cite[Proposition~5.6.3]{ForstnericBook}.
For a holomorphic fibre bundle with Oka fibre,
the total space is Oka if and only if the base is Oka \cite[Corollary~1.3]{ForstnericOka2009};
see also \cite[Theorem~5.6.5]{ForstnericBook}.
\item \emph{Flexibility.} Smooth flexible affine varieties are Oka \cite{AFKKZ}.
The smooth locus of a suspension defined by a nonconstant regular function on a flexible affine variety is flexible \cite{AKZ}.
Removing a subvariety of codimension at least two from a flexible affine variety again gives a flexible quasi-affine variety \cite{FKZ}.
\item Products of Oka manifolds,
complex Lie groups and complex homogeneous manifolds are Oka \cite{Gromov,ForstnericBook}.
\end{enumerate}

\section{Unramified covers and the elliptic quotient}
\label{sec:covers}

Put $V_1=Y\setminus \Sigma_2$,
where $\Sigma_2$ is the reduced order-four multiple fibre.
On the base of $V_1$,
set
\[
q=\frac{t}{t-1},\qquad q=s^3.
\]
The only branch value retained in the base is $p_1$.
Locally at that point,
the normalization of the pullback is the space before quotienting by \eqref{eq:logaction}.
Proposition~\ref{prop:logfill} therefore identifies the induced map
\begin{equation}
\widehat V_1\longrightarrow V_1
\label{eq:cyclic}
\end{equation}
with a finite unramified three-sheeted covering.
The cusp has three inverse images,
at $s^3=1$.

Over the regular locus,
the monodromy-invariant subgroup $K$ defines the cover
\[
\C^2/\Pi K\longrightarrow\C^2/\Pi\Lambda.
\]
It extends over the point above $p_1$,
since the cubic base change kills the order-three monodromy.
It also extends over each cusp.
Indeed,
by \eqref{eq:Psi} the cusp filling is the infinite toric space modulo $\overline\Lambda$,
while
\[
K/\Lambda_{\rm tor}=\Z\overline{\widehat u},
\qquad B_0\overline{\widehat u}=\widehat w.
\]
Thus the required cusp cover is the quotient by the subgroup acting by one primitive horizontal translation of the fan.
A subgroup of the free proper action in Proposition~\ref{prop:cuspfill} is again free and proper.
We obtain an unramified covering
\begin{equation}
R_1\longrightarrow\widehat V_1.
\label{eq:Kcover}
\end{equation}

The three periods indexed by $K$ are
\[
(\tau,\mu),\qquad (1,0),\qquad(0,1).
\]
Projection to the first coordinate therefore gives a principal $\C^*$-bundle
\begin{equation}
\C^2/\Pi K\longrightarrow E_\tau=\C/(\Z+\tau\Z),
\label{eq:Cstar}
\end{equation}
whose fibre direction is $\widehat\delta$.
Since $\widehat\delta$ is fixed,
rather than merely preserved up to sign,
the structure group contains no inversion.

After the cubic base change,
the elliptic quotient extends to a smooth minimal elliptic surface $\Et\to\C_s$.
Its complement of the zero section $O$ is
\begin{equation}
\mathcal S=\Et\setminus O =\{Y^2=4X^3-3sX-1\}\subset\C^3.
\label{eq:S}
\end{equation}
This follows directly from the Weierstrass equation of Proposition~\ref{prop:weierstrass},
\[
y^2=4x^3-3t^3(t-1)x-t^4(t-1)^2
\]
by
\[
t=\frac{s^3}{s^3-1},\qquad x=\frac{s^4X}{(s^3-1)^2},\qquad y=\frac{s^6Y}{(s^3-1)^3}.
\]
The three singular fibres are irreducible $I_1$ fibres at $s^3=1$.

Put $A=Y-i$ and $Z=4X^2-3s$.
Equation \eqref{eq:S} becomes
\begin{equation}
XZ=A(A+2i).
\label{eq:dan}
\end{equation}
This is a flexible Danielewski surface.
It can also be written as $SL_2(\C)/T$:
on $G=\{ad-bc=2i\}$,
the invariants
\[
X=ab,\quad A=cb,\quad A+2i=ad,\quad Z=cd
\]
give the quotient by the torus that scales the two columns with opposite weights.
The section $P$ becomes
\[
P=(X,Y)=(0,i),
\]
which is disjoint from $O$.
Fibrewise translation by $-P$ extends to an automorphism of the relatively minimal rational elliptic surface and identifies $\Et\setminus P$ with $\mathcal S$ \cite{Karayayla}.

For the other multiple fibre,
use $(t-1)/t=s^4$,
so that
\[
t=\frac1{1-s^4},\qquad x=\frac{s^2X}{(1-s^4)^2},\qquad y=\frac{s^3Y}{(1-s^4)^3}.
\]
The same substitution in \eqref{eq:weierstrass} gives
\begin{equation}
\mathcal S_2=\{Y^2=4X^3-3X-s^2\}.
\label{eq:S2}
\end{equation}
With $A=Y-is$,
$B=Y+is$,
it is the flexible Danielewski surface
\[
AB=X(4X^2-3),
\]
and $(0,is)$ is a section disjoint from the zero section of its relatively minimal elliptic completion $\Et_2$.
The four singular fibres are irreducible $I_1$ fibres at $s^4=1$.

\begin{proposition}
\label{prop:outsideD}
The Zariski-open manifold $Y\setminus\Sing W$ is Oka,
where $W=f^{-1}(p_0)$.
\end{proposition}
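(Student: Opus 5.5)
We prove that $Y\setminus\Sing W$ is Oka by Kusakabe's localization theorem (tool (1)). Each point will receive a Zariski-open Oka neighbourhood inside $Y\setminus\Sing W$. Two such neighbourhoods suffice: $V_1\setminus\Sing W=Y\setminus(\Sigma_2\cup\Sing W)$, and its analogue $V_2\setminus\Sing W$ with $V_2=Y\setminus\Sigma_1$. Both are complements of closed analytic subsets, so they are Zariski open in $Y\setminus\Sing W$, and together they cover it because $\Sigma_1\cap\Sigma_2=\emptyset$. By localization it suffices to show that each of them is Oka.

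For $V_1\setminus\Sing W$ we pass to covers. By tool (2), the Oka property is invariant under unramified coverings in both directions. We pull back along the unramified coverings \eqref{eq:cyclic} and \eqref{eq:Kcover}, restricted to the complement of the preimage of $\Sing W$. This gives the open set $R_1^\circ\subset R_1$ obtained by deleting the preimages of the singular strata of the three cusp fibres. By \eqref{eq:Cstar}, $R_1$ is fibred over the elliptic family by $\C^*$, with fibre direction $\widehat\delta$.

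The key step is to show that, after removing $\Sing W$, this fibration extends over the cusps. The target is a principal $\C^*$-bundle $R_1^\circ\to\mathcal S'$, where $\mathcal S'$ is the elliptic surface $\Et$ with the zero section removed, or at least a surface that is Oka. Over the regular locus and over $s=0$ this is \eqref{eq:Cstar}. At a cusp $s^3=1$, the quotient of $\mathcal Y_\epsilon$ by the single horizontal translation $\overline{\widehat u}\mapsto\widehat w$ is what remains. Removing the singular strata (the toric curves where three components meet, and the triple points) should leave the smooth part of $W$, namely open toric strata isomorphic to $\C^*\times\C^*$-pieces. We will check in the toric charts \eqref{eq:toriclocal} that projection to the $\widehat w$-direction modulo translation gives the $I_1$ fibre minus its node, with $\C^*$ fibres in the $\widehat\delta$ direction. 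We therefore expect the base to be $\Et$ minus the section and minus the nodes of the $I_1$ fibres.

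Using translation by $-P$, this base becomes $\mathcal S$ minus three points. These are the singular points of $XZ=A(A+2i)$ lying over $s^3=1$, which are in fact smooth points of the surface \eqref{eq:dan}; removing them is removing codimension two. By \eqref{eq:dan}, the Danielewski surface $\mathcal S$ is flexible and, via $SL_2(\C)/T$, homogeneous. By the \cite{FKZ} result in tool (3), the complement of finitely many points remains flexible quasi-affine, hence Oka. A principal $\C^*$-bundle over an Oka base has Oka total space by tool (2). Hence $R_1^\circ$ is Oka, and so is $V_1\setminus\Sing W$. The argument for $V_2\setminus\Sing W$ is the same, using the quartic base change, \eqref{eq:S2}, the section $(0,is)$ and the surface $AB=X(4X^2-3)$.

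The main obstacle is the cusp analysis: showing that the $K$-cover, with $\Sing W$ removed, is really a $\C^*$-bundle over the elliptic surface minus a section and finitely many points. If the fibration instead degenerates along components of $W$, we would fall back on writing explicit complete vector fields in the toric charts.
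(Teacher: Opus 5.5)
There is a genuine gap. Your outer structure matches the paper's: the two Zariski opens $Y\setminus(\Sigma_2\cup\Sing W)$ and $Y\setminus(\Sigma_1\cup\Sing W)$, the covers \eqref{eq:cyclic}--\eqref{eq:Kcover}, and flexibility of $\mathcal S$ minus the nodes via \cite{FKZ}. The failure is in the step you call key.

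\textbf{The global $\C^*$-bundle does not exist.} The $K$-cover quotients the cusp fan only by $K/\Lambda_{\rm tor}=\Z\overline{\widehat u}$, that is, by horizontal translations. So at each of the three lifted cusps the central fibre in $R_1$ has infinitely many components, one for each row. After deleting the preimage of $\Sing W$, each row leaves its own open stratum $(\C^*)^2$. Projection along $\widehat\delta$ maps every one of these strata onto the same punctured $I_1$ fibre. Hence the fibre of $R_1^\circ\to\Et$ over such a point is $\bigsqcup_{j\in\Z}\C^*$; before the deletion it is an infinite chain of rational curves. Consequently $R_1^\circ$ is not a principal $\C^*$-bundle over any surface.

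\textbf{The missing localization.} At each lifted cusp, retain one row and delete all other row divisors. This family of divisors is locally finite, so its complement is Zariski open. On that open, projection along $\widehat\delta$ maps each cone of the one-row fan isomorphically onto the Tate fan. This gives a genuine principal $\C^*$-bundle. These one-row opens cover $R_1^\circ$, because a point of a cusp fibre off $\Sing W$ lies in exactly one row stratum. Your fallback to unspecified complete vector fields in toric charts does not replace this step.

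\textbf{The base is also misidentified.} The bundle \eqref{eq:Cstar} lives over all of $E_\tau$, so a one-row open lies over $\Et$ minus the three nodes, with the zero section $O$ included.
\begin{itemize}
\item A bundle over $\Et\setminus O$ would miss the preimage of $O$. That base is already $\mathcal S$, so translation by $-P$ plays no role there.
\item $\Et$ minus the nodes still contains compact elliptic fibres, so it is not quasi-affine, and \cite{FKZ} does not apply to it directly.
\item Instead, cover it by $\Et\setminus O$ and $\Et\setminus P$, each with the nodes removed. Both are identified with $\mathcal S$ minus three points, the second via translation by $-P$. They are therefore flexible and Oka.
\item Then apply, in order: localization on the base, bundle invariance for each one-row open, localization on $R_1^\circ$, and unramified descent.
\end{itemize}

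A minor correction: the double curves of $W$ are where two local branches meet, and the triple points are where all three meet. You described the curves as places where three components meet.
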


\begin{proof}
First work on $Y\setminus(\Sigma_2\cup\Sing W)$ and use \eqref{eq:cyclic}--\eqref{eq:Kcover}.
At a cusp,
quotienting the source $A_2$-fan by $K/\Lambda_{\rm tor}$ identifies horizontal translates;
its central components are indexed by the remaining row coordinate.
After deleting $\Sing W$,
retain one row and delete every other row divisor.
Projection along the $\widehat\delta$-direction maps every cone of this one-row fan isomorphically onto the Tate fan.
Hence the resulting Zariski open is a principal $\C^*$-bundle over $\Et$ with the three nodal points removed.

That base is Oka.
The complements of $O$ and $P$ cover $\Et$;
after translation,
both are the flexible affine surface $\mathcal S$.
Removing the three nodes is a codimension-two removal,
so the two resulting quasi-affine surfaces are flexible by \cite{FKZ}.
They are Oka and cover the base.
Bundle invariance and localization make every one-row open Oka.
Such opens cover $R_1$,
and unramified descent gives
\[
Y\setminus(\Sigma_2\cup\Sing W)\quad\text{Oka}.
\]
For the order-four construction,
replace $(\Et,\mathcal S,P)$ by $(\Et_2,\mathcal S_2,(0,is))$ and the three nodes by the four nodes at $s^4=1$.
The same bundle and localization argument gives
\[
Y\setminus(\Sigma_1\cup\Sing W)\quad\text{Oka}.
\]
These two Zariski opens cover $Y\setminus\Sing W$,
so Kusakabe localization finishes the proof.
\end{proof}

\section{Small resolutions of the scalar model}
\label{sec:scalar}

Fix $a^3=1$.
In $\mathcal S$,
impose $uv=s-a$ and eliminate $s$.
The resulting threefold is
\begin{equation}
C_a=\{Y^2+3Xuv=4X^3-3aX-1\}\subset\C^4.
\label{eq:Ca}
\end{equation}
It has one ordinary double point at
\[
X_0=-\frac1{2a},\qquad Y=u=v=0.
\]
Indeed the partial derivatives of the defining equation vanish only at this point,
and its quadratic term there is a nondegenerate form in $X-X_0,Y,u,v$.

\begin{theorem}
\label{thm:scalar}
Every analytic small resolution $U_a\to C_a$ is Oka.
\end{theorem}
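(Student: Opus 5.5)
By the Ehresmann-free argument of Section~\ref{sec:tools}, I would prove Theorem~\ref{thm:scalar} by Kusakabe localization. I need a Zariski-open Oka neighbourhood of every point of $U_a$. Points off the exceptional curve $E\simeq\PP^1$ and points on $E$ will be handled separately.

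\emph{Step 1: reduce to one resolution.} Near the node the quadratic term factors as $\alpha\beta=-3X_0uv$, with $\alpha,\beta=Y\mp\sqrt{c}\,(X-X_0)$. Planes such as $\{u=\alpha=0\}$ and $\{v=\alpha=0\}$ lie in opposite rulings of the quadric cone. A small resolution of an isolated ordinary double point is determined, as an analytic germ, by the choice of ruling. It is therefore one of exactly two modifications, namely the blow-ups of the Weil divisors locally given by these planes.

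The involution $u\leftrightarrow v$ preserves \eqref{eq:Ca}, fixes the node and exchanges the two rulings. Hence it lifts to a biholomorphism between the two small resolutions. Since the Oka property is biholomorphically invariant, it suffices to treat one resolution $U_a$. By uniqueness of the germ, $U_a$ can be described explicitly by a single local blow-up glued to $C_a\setminus\{\text{node}\}$.

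\emph{Step 2: the complement of $E$.} The threefold $C_a$ is the suspension $\{uv=s-a\}$ over the flexible affine surface $\mathcal S$ of \eqref{eq:S}, and $s-a$ is a nonconstant regular function on it. By \cite{AKZ} its smooth locus $C_a\setminus\{\text{node}\}$ is flexible, hence Oka by \cite{AFKKZ}. This set is biholomorphic to the Zariski-open set $U_a\setminus E$, so every point off $E$ has a Zariski-open Oka neighbourhood.

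\emph{Step 3: a Zariski neighbourhood of $E$.} This is the main obstacle. My first attempt is to find a Zariski-open $O\subset U_a$ containing $E$ that is a holomorphic fibre bundle, with Oka fibre, over an Oka base, or a product of such. The natural candidate is $O=U_a\setminus\widetilde D$, where $\widetilde D$ is the strict transform of a divisor through the node that avoids $E$ in the chosen blow-up chart. Examples are the closure of $\{v=0,\ \beta=0\}$, or the preimage of a fibre component of $s=a$ away from the node.

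The local chart of the blow-up of $(u,\alpha)$ is $\C^3$. I would try to globalize it using the coordinate $\lambda=\alpha/u=-3X_0v/\beta$ on the exceptional fibre. The aim is to show that $O$ is an affine-line or $\C^*$-bundle over a flexible affine surface built from the Danielewski model \eqref{eq:dan}, in the spirit of Proposition~\ref{prop:outsideD}. It may instead be a flexible quasi-affine threefold after removing a codimension-two set, using \cite{FKZ}.

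If that globalization is awkward, I would fall back on complete vector fields. Several fields on $C_a$ preserve the ideal of the blown-up plane: the $\C^*$-action $(u,v)\mapsto(\lambda u,\lambda^{-1}v)$, and the locally nilpotent derivations of the suspension of the form $u\cdot\partial$ and $\beta\cdot\partial$ obtained from the LNDs of the Danielewski surface. These lift to complete fields on $U_a$. I would then check that finitely many of them span $T_pU_a$ at every $p\in E$. The $\C^*$-action and its conjugates under $\operatorname{SL}_2$ (acting on $\mathcal S\simeq SL_2(\C)/T$) are the candidates for spanning the direction along $E$.

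This gives a dominating spray on a neighbourhood $O$, which is then Zariski open and Oka. Localization on the cover $\{U_a\setminus E,\ O\}$ finishes the proof.
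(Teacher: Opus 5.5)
Your Steps 1 and 2 are correct, and Step 2 is exactly the paper's first step. Step 3 is the whole content of the theorem, and you do not carry it out. Both routes you sketch for it break down.

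\textbf{The first route.} No set $U_a\setminus\widetilde D$, with $\widetilde D$ the strict transform of a divisor $D$ through the node, can contain $E$. The map $\widetilde D\to D$ is proper with dense image, hence surjective, so $\widetilde D$ meets $E=\pi^{-1}(\text{node})$. Your concrete candidates are also not global objects. We have $\{v=0\}\cap C_a=\Gamma\times\C_u$, where $\Gamma=\{Y^2=4X^3-3aX-1\}$ is an \emph{irreducible} nodal cubic. So the closure of the germ $\{v=\beta=0\}$ is the whole Cartier divisor $\{v=0\}$, whose pullback contains $E$. In fact, in the blow-up of $(u,\alpha)$, already the strict transform of the germ $\{v=\beta=0\}$ contains $E$, since that germ lies in the ruling of $\{u=\alpha=0\}$. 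Likewise $\{s=a\}=\{uv=0\}$ has no component missing the node. For the same reason, the ``ideal of the blown-up plane'' exists only as a germ at the node. A bundle structure on some Zariski neighbourhood of $E$ is not excluded in principle, but you give no construction.

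\textbf{The vector-field fallback} provably fails. Since $N_{E/U_a}\cong\OO(-1)\oplus\OO(-1)$ has no nonzero holomorphic sections, every holomorphic vector field on a neighbourhood of $E$ is tangent to $E$ along $E$. At $p\in E$, any finite family of such fields therefore spans at most the line $T_pE$. A spray built by composing complete flows on an open set containing $E$ is never dominating along $E$.

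\textbf{The missing idea} is to make the two rulings globally visible, and then use Zariski-open Oka sets that each contain only part of the exceptional locus. On $C_a$ the two branches of $\Gamma$ at the node are joined. The paper deletes $\mathcal B_a=\{X=1/a\}$, which misses $E$, and passes to the unramified double cover $r^2=X-1/a$. Since $4X^3-3aX-1=4(X-X_0)^2(X-1/a)$, the branches separate globally there. With $p,q=2(X-X_0)r\mp Y$, $\alpha=3X$ and $h=4(X-X_0)r$, the cover becomes $pq=\alpha uv$, $p+q=h$, with two nodes. It has global kernel and cokernel incidence resolutions of $\bigl(\begin{smallmatrix}p&\alpha u\\ v&q\end{smallmatrix}\bigr)$. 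These are covered by explicit Zariski charts $\C^*\times\C^2$, $\C^*\times\C^*\times\C$ and $H_A\cong\C^*\times\C^2$. Each chart is Stein, so none contains a whole exceptional curve, which is exactly how the normal-bundle obstruction is avoided.

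The deck involution exchanges the rulings, so the lift of $U_a$ has opposite rulings at the two lifted nodes. The paper covers it by two Zariski opens:
\begin{enumerate}
\item a kernel resolution with one exceptional curve deleted;
\item a cokernel resolution with the other exceptional curve deleted.
\end{enumerate}
Both are Oka via the elliptic surfaces $Q_J$. Covering invariance then gives that $U_a\setminus\mathcal B_a$ is Oka, and localization on $\{U_a\setminus\mathcal B_a,\ U_a\setminus E\}$ finishes the proof. Your Step 1 reduction is harmless but not needed in this argument.
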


\begin{proof}
The contraction $C_a$ is the suspension $uv=s-a$ over the flexible surface $\mathcal S$.
Its smooth locus is flexible by \cite{AKZ},
hence Oka.
If $E\simeq\PP^1$ is the exceptional curve,
then
\begin{equation}
U_a\setminus E
\label{eq:UaE}
\end{equation}
is Oka.

Let $\mathcal B_a=\{X=1/a\}$;
it is disjoint from the node.
On the complement of $\mathcal B_a$,
take the unramified double cover
\[
r^2=X-1/a,\qquad r\in\C^*.
\]
Since $a^3=1$,
\begin{equation}
4X^3-3aX-1=4\left(X+\frac1{2a}\right)^2 \left(X-\frac1a\right).
\label{eq:factor}
\end{equation}
Put
\[
p=2(X-X_0)r-Y,\quad q=2(X-X_0)r+Y,\quad \alpha=3X,\quad h=4(X-X_0)r.
\]
Then the split cover is
\begin{equation}
pq=\alpha uv,\qquad p+q=h.
\label{eq:split}
\end{equation}
It has two nodes,
at the two zeros of $h$;
$\alpha$ is nonzero there.

Consider the kernel incidence resolution of
\[
M=\begin{pmatrix}p&\alpha u\\v&q\end{pmatrix}.
\]
In the projective kernel coordinate $[\xi:\eta]$,
the chart $\xi\ne0$,
with $\mu=\eta/\xi$,
is
\[
p=-\alpha u\mu,\qquad v=-h\mu-\alpha u\mu^2,
\]
and is therefore $\C^*\times\C^2$.
On $\eta\ne0$,
with $\lambda=\xi/\eta$,
it is
\begin{equation}
H=\{\alpha(r)u+\lambda h(r)+\lambda^2v=0\} \subset\C^*\times\C^3.
\label{eq:H}
\end{equation}
Let $r_+,r_-$ be the two zeros of $\alpha$,
put $h_i=h(r_i)\ne0$,
and define
\[
\mathcal D_i=\{r=r_i,\ h_i+\lambda v=0\}\subset H.
\]
The open $\{\lambda\ne0\}$ is $\C^*\times\C^*\times\C$,
by solving for $v$.
On $H_A=H\setminus(\mathcal D_+\cup \mathcal D_-)$,
the quotient $\ell=\lambda/\alpha$ extends holomorphically across $\alpha=0$,
because
\[
\ell=-\frac{u}{h+\lambda v}
\]
there.
Conversely,
\[
\lambda=\alpha \ell,\qquad u=-\ell h-\alpha \ell^2v
\]
identify $H_A$ with $\C^*\times\C^2$.
These two opens cover $H$.
Together with the first incidence chart they form a Zariski-open Oka cover of the kernel resolution.
Transposition gives the same conclusion for the cokernel resolution.

We also require these resolutions after deleting either exceptional curve.
If $\rho_j$ is a zero of $h$,
deletion replaces one factor $\C^*\times\C$ in each chart by
\begin{equation}
Q_J=(\C^*\times\C)\setminus \{(\rho_j,0):j\in J\}.
\label{eq:QJ}
\end{equation}
This surface is elliptic.
Indeed,
with $p_J(r)=\prod_{j\in J}(r-\rho_j)$,
consider the complete fields
\[
ur\frac{\partial}{\partial r},\qquad p_J(r)\frac{\partial}{\partial u},\qquad u\frac{\partial}{\partial u},
\]
and the conjugate of the first field by the automorphism $(r,u)\mapsto(r,u+p_J(r))$.
All fields preserve the deleted set.
At $u\ne0$,
the first and third span;
at $u=0$ in $Q_J$,
one has $p_J(r)\ne0$,
so the second field and the conjugated field span.
Their flows therefore give a dominating spray.
Products and localization show that a global kernel or cokernel resolution remains Oka after deleting any chosen subset of its two exceptional curves.

It remains to identify the ruling pulled back from $U_a$.
We use the standard local classification of a threefold ordinary double point:
its analytic small resolutions are the two incidence resolutions,
distinguished by the two rulings \cite{Friedman}.
The deck involution satisfies
\[
r\mapsto-r,\qquad p\mapsto-q,\qquad q\mapsto-p,\qquad h\mapsto-h.
\]
If $\mathsf J=\begin{psmallmatrix}0&1\\1&0\end{psmallmatrix}$ and $\mathsf D=\operatorname{diag}(-1,1)$,
then the transformed matrix is
\[
-\mathsf D\,\mathsf J M^{\mathsf t}\mathsf J\,\mathsf D.
\]
Thus the involution exchanges the kernel and cokernel rulings.
The pullback of either small resolution of the original node consequently has opposite rulings at the two lifted nodes.
Removing one lifted exceptional curve makes it a global kernel resolution with that curve deleted;
removing the other makes it a global cokernel resolution with that curve deleted.
These two Oka Zariski opens cover the mixed resolution.
Unramified descent proves that $U_a\setminus \mathcal B_a$ is Oka.

Finally,
$\mathcal B_a\cap E=\varnothing$,
so $U_a\setminus \mathcal B_a$ and \eqref{eq:UaE} form a Zariski-open Oka cover of $U_a$.
Localization proves the theorem.
\end{proof}

\section{Line-bundle twisting}
\label{sec:twisted}

\begin{theorem}
\label{thm:twisted}
Let $L\to\Et$ be any holomorphic line bundle and let $a^3=1$.
In the total space of $L\oplus L^{-1}$,
put
\begin{equation}
Z_a(L)=\{uv=s-a\}.
\label{eq:ZaL}
\end{equation}
Every analytic small resolution $R_a(L)\to Z_a(L)$ is Oka.
\end{theorem}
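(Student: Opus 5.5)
The plan is to follow the paper's remark that line-bundle twisting is handled by pulling back to a principal $\C^*$-bundle, and thereby reduce Theorem~\ref{thm:twisted} to Theorem~\ref{thm:scalar}. We cover $\Et$ by the two Zariski opens $\Omega_O=\Et\setminus O$ and $\Omega_P=\Et\setminus P$. Each is identified with the flexible affine surface $\mathcal S$ of \eqref{eq:S}; for $\Omega_P$ this uses fibrewise translation by $-P$, which preserves the fibration over $\C_s$ and hence the function $s$. The open $f_R^{-1}(\Omega)\subset R_a(L)$, with $f_R:R_a(L)\to Z_a(L)\to\Et$, is Zariski open. So by Kusakabe localization it suffices to prove that $R_a(L)|_{\Omega}$ is Oka for $\Omega=\Omega_O,\Omega_P$.

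Fix such an $\Omega\simeq\mathcal S$ and let $P_L\to\Omega$ be the principal $\C^*$-bundle $L|_\Omega$ minus its zero section. Form the fibre product
\[
Q=P_L\times_\Omega R_a(L)|_\Omega .
\]
It is a principal $\C^*$-bundle over $R_a(L)|_\Omega$, with fibre $\C^*$ Oka. By the bundle invariance of Section~\ref{sec:tools}, $Q$ is Oka if and only if $R_a(L)|_\Omega$ is.

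On the other hand, a point $e\in P_L$ over $x$ trivializes $L_x$ and $L_x^{-1}$. The map
\[
(e,(u,v))\longmapsto\bigl(x,\;u/e,\;v\,e\bigr)
\]
therefore sends $P_L\times_\Omega Z_a(L)|_\Omega$ isomorphically onto $P_L\times_\Omega C_a$, where $C_a=\{uv=s-a\}$ over $\mathcal S$ is the scalar model \eqref{eq:Ca}. This isomorphism is an isomorphism over $P_L$ of families of threefolds with one node in each fibre over $x$ equal to the node of the $I_1$ fibre at $s=a$. Since the nodes are isolated and $L$ is locally trivial near them, the small resolution $R_a(L)$ pulls back to a small resolution of $P_L\times_\Omega C_a$. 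This resolution is locally constant along the connected fibres $\C^*$ of $P_L$, so it descends to a small resolution $U_a\to C_a$. Here we use that the two rulings at a node of \cite{Friedman} cannot switch along a connected family. Hence
\[
Q\simeq P_L\times_\Omega U_a,
\]
which is a principal $\C^*$-bundle over $U_a$. By Theorem~\ref{thm:scalar}, $U_a$ is Oka, so by bundle invariance $Q$ is Oka, and therefore $R_a(L)|_\Omega$ is Oka.

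The main obstacle is making the identification of resolutions rigorous. Concretely, we must check three things:
\begin{enumerate}
\item the small resolution of $Z_a(L)$ over $\Omega$ corresponds, under the twisting isomorphism, to a genuine analytic small resolution of the scalar threefold $C_a$ (after the translation automorphism in the chart $\Omega_P$);
\item the node lies in both charts (it sits at $X_0=-1/(2a)$, $Y=0$, which is distinct from $P=(0,i)$);
\item the translation by $-P$ preserves $s$, so that $uv=s-a$ is carried to the same equation.
\end{enumerate}
I would verify the first point by working in a neighbourhood of the single node, where $L$ is trivial and the twisting isomorphism is the identity up to the rescaling $(u,v)\mapsto(cu,c^{-1}v)$. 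This rescaling preserves each of the two planes families and thus each ruling. Away from the node the small resolution is an isomorphism, so no choice is involved there.
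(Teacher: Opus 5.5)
Your proposal is correct and is essentially the paper's proof: you pull back along $L^\times\to\mathcal S$ so that the tautological section untwists $Z_a(L)$ into $C_a\times_{\mathcal S}L^\times$, use that the connected structure group $\C^*$ (acting by opposite scalings on $u,v$) cannot switch the discrete pair of rulings, so the pulled-back resolution is $U_a\times_{\mathcal S}L^\times$, apply Theorem~\ref{thm:scalar} with bundle invariance in both directions, and finish by covering $\Et$ with $\Et\setminus O$ and $\Et\setminus P$ (the latter identified with $\mathcal S$ by translation by $-P$, which preserves $s$) and localizing. One small slip: your untwisting map should read $(e,(u,v))\mapsto(e,u/e,ve)$, keeping the $P_L$-coordinate, so that it is an isomorphism onto $P_L\times_\Omega C_a$ rather than a map to $C_a$.
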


\begin{proof}
First restrict to $\mathcal S=\Et\setminus O$.
Write $L^\times$ for the total space of $L$ with its zero section removed,
and pull back along the principal bundle $L^\times\to\mathcal S$.
Its tautological section trivializes both pulled-back line bundles,
so the pulled contraction is the fibre product of the scalar model $C_a$ with $L^\times\to\mathcal S$.

The singular locus upstairs is the connected $\C^*$-fibre over the node.
The two possible small-resolution rulings form a discrete set,
while the structure group $\C^*$ is connected and acts by opposite scalar multiplication on $u,v$;
it preserves each ruling.
Hence the pulled-back small resolution is the fibre product of one of the two scalar small resolutions with $L^\times$.
It is a principal $\C^*$-bundle over the scalar resolution and is Oka by Theorem~\ref{thm:scalar}.
Since it is also a principal $\C^*$-bundle over $R_a(L)|_{\mathcal S}$,
bundle invariance implies that $R_a(L)|_{\mathcal S}$ is Oka.

The two opens $\Et\setminus O$ and $\Et\setminus P$ cover $\Et$.
Translation by $-P$ identifies the second with $\mathcal S$,
preserves the base coordinate $s$,
and carries $L$ to another holomorphic line bundle.
The preceding argument therefore makes the inverse images of both base opens Oka.
They form a Zariski-open cover of $R_a(L)$,
and localization completes the proof.
\end{proof}

\section{Two adjacent toroidal rows}
\label{sec:rows}

We now reconstruct from \eqref{eq:Psi} the strata omitted in Proposition~\ref{prop:outsideD}.
The source cusp fan is the cone over the standard $A_2$-triangulation of $\R^2$,
with vertices $\Z^2\times\{1\}$.
After quotienting by $K/\Lambda_{\rm tor}$,
horizontal translates are identified and the remaining components are indexed by rows.

Take two adjacent rows.
One fundamental square has rays
\[
r_{00}=(0,0,1),\ r_{10}=(1,0,1),\ r_{01}=(0,1,1),\ r_{11}=(1,1,1),
\]
and is triangulated along one diagonal.
On the dense torus define
\begin{equation}
\xi=x_1,\qquad \eta=t_c/x_1,\qquad u=x_2,\qquad v=t_c/x_2.
\label{eq:fourchars}
\end{equation}
These are the four primitive generators of the dual semigroup of the untriangulated cone,
and
\begin{equation}
\xi\eta=uv=t_c.
\label{eq:conifold}
\end{equation}
Thus forgetting the diagonal contracts the two-row fan to a conifold,
and the source diagonal is one of its small resolutions.

\begin{lemma}
\label{lem:strip}
Use the orientation determined by the fixed vector $\widehat\delta$.
Choose a lower row at each of the three lifted cusps of $R_1$,
and let $L^\times\to\Et$ be the resulting one-row principal $\C^*$-bundle.
Adding the upper adjacent row and all intervening cones gives a small resolution of
\begin{equation}
Z(L)=\{uv=s^3-1\}\subset\Tot(L\oplus L^{-1}).
\label{eq:ZL}
\end{equation}
The ruling at each of the three nodes is the ruling prescribed by the local source fan.
\end{lemma}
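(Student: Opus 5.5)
The plan is to build the two-row space as a union of torus-equivariant charts and compare it with $\Tot(L\oplus L^{-1})$ chart by chart, first near the three lifted cusps and then away from them.

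\emph{Away from the cusps.} Here nothing changes. The two-row open coincides with the one-row bundle $L^\times\to\Et$ of Proposition~\ref{prop:outsideD}, because over $s^3\ne1$ the torus fibres are untouched. The target $Z(L)$ has $s^3-1\ne0$ there, so $uv=s^3-1$ forces $u\ne0$. Hence $Z(L)$ restricted to this set is $L^\times$, with $u$ the fibre coordinate of $L$ and $v=(s^3-1)/u$. The one hypothesis to fix is the orientation: the fixed vector $\widehat\delta$ determines which of the two torus characters is the fibre coordinate of $L$ rather than of $L^{-1}$.

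\emph{Near a lifted cusp.} Here I work on the toric model of Proposition~\ref{prop:cuspfill} after quotienting by $K/\Lambda_{\rm tor}=\Z\overline{\widehat u}$. By \eqref{eq:B0} this quotient acts by the primitive horizontal translation of the $A_2$-fan. The steps are as follows.
\begin{enumerate}
\item Fix two adjacent rows. Modulo horizontal translation, their union is the image of the cones over one fundamental square with rays $r_{00},r_{10},r_{01},r_{11}$, triangulated along its diagonal.
\item By \eqref{eq:fourchars}--\eqref{eq:conifold}, the untriangulated cone is the conifold $\xi\eta=uv=t_c$.
\item The horizontal identification acts on $\xi,\eta$ by the toric automorphism $\Phi_{\overline{\widehat u}}$ of \eqref{eq:Psi}, twisted by $c_{\overline{\widehat u}}(t_c)$, and fixes the row characters $u,v$ up to units.
\item After the quotient, the pair $(\xi,\eta)$ is traded for a point of the one-row elliptic fibre. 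By the proof of Proposition~\ref{prop:outsideD}, the one-row fan projects along $\widehat\delta$ isomorphically onto the Tate fan, so this fibre is the nodal $I_1$ fibre of $\Et$ over $s^3=1$.
\item $t_c$ is a local coordinate vanishing simply at each lifted cusp. Since $s^3-1$ also has simple zeros there, I absorb the unit into $v$, giving $uv=s^3-1$.
\item The node of the conifold sits over the node of the $I_1$ fibre. Contracting the diagonal therefore contracts exactly one curve over each of the three nodes, and the result is small.
\item The ruling is the one cut out by the chosen diagonal, i.e.\ the source small resolution of \eqref{eq:conifold}. This proves the last sentence of the lemma.
\end{enumerate}

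\emph{The main obstacle.} The hard part is globalizing: showing that the characters $u=x_2$ and $v=t_c/x_2$ glue, across the cusp charts and $\widehat V_1$, to sections of the same pair $L,L^{-1}$ defined by the lower-row bundle. The local computation is routine by comparison. I would first verify that the transition functions of $x_2$ between cusp charts are exactly the $\C^*$-cocycle of $L^\times$. This follows because $x_2$ is the $\widehat\delta$-dual coordinate and $\widehat\delta$ is monodromy-fixed, so no inversion occurs. Then $v$ transforms by the inverse cocycle, since $t_c$ is a scalar function on the base.

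A second check is that the upper-row divisor, where $u=0$, is a section of the correct line bundle and not of $L$ tensored with a divisor supported on cusp fibres. This amounts to matching the row index shift with the $\widehat w$-translation $B_0\overline{\widehat u}=\widehat w$. I expect it to reduce to the height bookkeeping in \eqref{eq:toriclocal}.
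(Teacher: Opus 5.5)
Your proposal follows the paper's proof step for step: the lower row is $L^\times$ by projection along $\widehat\delta$, and the characters $\xi,\eta,u,v$ give the local conifold $\xi\eta=uv=t_c$ with the diagonal as its small resolution. The fixed vector $\widehat\delta$ gives transitions $u'=gu$ with multipliers from \eqref{eq:Psi}, normalizing $t_c$ to $s^3-1$ forces $v'=g^{-1}v$, and the ruling is read off from $uv=xy$ at each node.

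The ``second check'' you leave open is automatic, and it does not involve the $\widehat w$-translation, which preserves rows. On the strip, $\operatorname{div}(x_2)$ is exactly the upper row and $\operatorname{div}(t_c/x_2)$ is exactly the lower row. Hence $u$ vanishes simply on the upper row, and the normalization $uv=s^3-1$ already determines $v$ as the fibre coordinate of $L^{-1}$.
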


\begin{proof}
Projection of a one-row fan along $\widehat\delta$ is an isomorphism on every cone onto the Tate fan.
Hence the one-row open is a principal $\C^*$-bundle over $\Et$,
including the nodal points,
and therefore is the punctured total space of a line bundle $L$.

Locally at a cusp,
\eqref{eq:fourchars}--\eqref{eq:conifold} give the desired contraction.
On an overlap the fixed primitive vector $\widehat\delta$ prevents inversion,
so a change of fibre coordinate has the form $u'=g u$,
with $g$ holomorphic and nowhere zero.
Normalize the local cusp parameters by the global function $s^3-1$;
the equation then forces $v'=g^{-1}v$.
Formula \eqref{eq:Psi} supplies precisely these multipliers.
The contractions therefore glue to \eqref{eq:ZL}.

At a nodal point choose coordinates $x,y$ on $\Et$ with $s^3-1=xy$.
The contraction is $uv=xy$,
and the two triangles in the square are the two standard affine charts of one of its small resolutions.
This proves both smoothness and the assertion about the ruling.
\end{proof}

Let $a_1,a_2,a_3$ be the cube roots of unity and,
in \eqref{eq:ZL},
put
\[
F_k=\{u=0,\ s=a_k\}.
\]
For fixed $i$,
delete from the small resolution the proper transforms of $F_k$ for $k\ne i$.
On the resulting open,
the rational section
\begin{equation}
v_i=\frac{v}{\prod_{k\ne i}(s-a_k)}
\label{eq:divide}
\end{equation}
is holomorphic.
Indeed,
over $s=a_k$ the deleted component is $u=0$,
while on its complement the equation supplies the cancellation.
The new equation is
\begin{equation}
uv_i=s-a_i.
\label{eq:onebranch}
\end{equation}
Near $uv=xy$,
the divisor $F_k$ has branches $\{u=x=0\}$ and $\{u=y=0\}$.
For either ruling,
the proper transform of their union contains the exceptional curve.
For example,
in a kernel chart one has $u=-x\lambda$,
$y=-v\lambda$;
the transform of $u=0$ is the union $\{x=0\}\cup\{\lambda=0\}$,
and the exceptional curve lies in $\{x=0\}$.
The other ruling is identical after transposition.
Thus the deleted exceptional curves over $a_k$,
$k\ne i$,
disappear exactly as required for \eqref{eq:divide} to identify the open with a small resolution of the one-node model \eqref{eq:onebranch}.

\begin{proposition}
\label{prop:stripOka}
Every simultaneous two-row strip in $R_1$ is Oka.
\end{proposition}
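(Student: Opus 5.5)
By Lemma~\ref{lem:strip}, a simultaneous two-row strip in $R_1$ is a small resolution $\widetilde Z(L)$ of $Z(L)=\{uv=s^3-1\}\subset\Tot(L\oplus L^{-1})$ over $\Et$, with the ruling at each of the three nodes fixed by the local source fan. I will cover $\widetilde Z(L)$ by Zariski-open sets, each of which is identified with a space already known to be Oka, and then apply Kusakabe localization.

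\emph{The cover.} For $i=1,2,3$ let $O_i$ be the complement in $\widetilde Z(L)$ of the proper transforms of $F_k=\{u=0,\ s=a_k\}$ for $k\ne i$. Each $O_i$ is Zariski open.

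\emph{Identifying $O_i$.} By the discussion following Lemma~\ref{lem:strip}, the section $v_i$ of \eqref{eq:divide} is holomorphic on $O_i$ and satisfies $uv_i=s-a_i$. That discussion also shows the deleted proper transforms contain the exceptional curves over $a_k$, $k\ne i$. Away from $s=a_k$ the change of variable $v\mapsto v_i$ is invertible. At $s=a_k$, $k\ne i$, only $\{u\ne0\}$ survives in $O_i$, and there $v_i=(s-a_i)/u$ is determined. So $(\text{point of }\Et,u,v_i)$ identifies $O_i$ with the open subset of a small resolution $R_{a_i}(L)$ of $Z_{a_i}(L)$ obtained by deleting
\[
\{u=0,\ s=a_k\}\quad (k\ne i).
\]
The ruling at the node over $a_i$ is inherited from $\widetilde Z(L)$. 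The bundle $L$ is unchanged, because dividing $v$ by the scalar function $\prod_{k\ne i}(s-a_k)$ does not change the transition multipliers $g^{-1}$. Theorem~\ref{thm:twisted} makes $R_{a_i}(L)$ Oka.

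\emph{The main obstacle} is that $O_i$ is a proper Zariski open of $R_{a_i}(L)$: the divisor $\{u=0,\ s=a_k\}$ has been removed, and open subsets of Oka manifolds need not be Oka. I will handle this by reopening the proof of Theorem~\ref{thm:twisted} and checking that each step survives the deletion.
\begin{enumerate}
\item Pull back to $L^\times$ over $\mathcal S$ (and over $\Et\setminus P$ after translation). This reduces the question to the scalar model $U_{a_i}$ with the divisors $\{u=0,\ s=a_k\}$ removed.
\item There, $s-a_i=a_k-a_i\ne0$, so the equation $uv_i=s-a_i$ forces $u\ne0$ automatically. Hence $\{u=0,\ s=a_k\}$ is empty in $U_{a_i}$, and nothing is actually deleted from the scalar model.
\end{enumerate}
So $O_i\simeq R_{a_i}(L)$ exactly, and it is Oka by Theorem~\ref{thm:twisted}. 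If a residual codimension-two discrepancy appears at the nodes, the fallback is the $Q_J$-type elliptic charts from the proof of Theorem~\ref{thm:scalar}.

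\emph{Covering and conclusion.} I must check that $O_1,O_2,O_3$ cover $\widetilde Z(L)$. A point fails to lie in $O_i$ only if it lies on a proper transform over some $a_k$ with $k\ne i$. Such a point lies over a single value $s=a_k$, so it lies in $O_k$. The three opens are Oka and cover the strip, so Kusakabe localization (item (1) of Section~\ref{sec:tools}) shows that every simultaneous two-row strip is Oka.
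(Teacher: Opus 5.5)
Your proposal is correct and follows the paper's proof. It uses the same three opens $O_i$, obtained by deleting the proper transforms of $F_k$ for $k\ne i$. Via \eqref{eq:divide} these are identified with small resolutions $R_{a_i}(L)$ of the one-node model, which Theorem~\ref{thm:twisted} makes Oka; the covering check is essentially the same, and Kusakabe localization assembles them. Your ``main obstacle'' is not really one: $\{u=0,\ s=a_k\}$ is empty directly from $uv_i=s-a_i$ on $Z_{a_i}(L)$ itself, so the detour through the scalar model and the $Q_J$ fallback are unnecessary.
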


\begin{proof}
The three opens obtained from \eqref{eq:divide} are Oka by Theorem~\ref{thm:twisted}.
They cover:
away from the exceptional set a point lies on at most one $F_k$,
and the exceptional curve over $a_i$ is retained in the $i$-th open.
Kusakabe localization proves the proposition.
\end{proof}

\section{Proof of the Oka property}
\label{sec:global}

\begin{proof}[Proof of Theorem~\ref{thm:main}]
The normalization \eqref{eq:normbeta} gives $D(\beta^*)\le-1$,
so $c_0=-i$ is admissible.
At every lifted cusp choose two adjacent rows and delete all other row divisors.
The deleted family is locally finite,
so its complement is a Zariski-open simultaneous two-row strip.
These strips cover $R_1$:
every triangle of the $A_2$-fan has vertices in two adjacent rows.
Proposition~\ref{prop:stripOka} and localization therefore show that $R_1$ is Oka.

Both arrows
\[
R_1\longrightarrow\widehat V_1 \longrightarrow V_1=Y\setminus \Sigma_2
\]
are unramified holomorphic coverings.
Covering invariance gives that $Y\setminus \Sigma_2$ is Oka.
This open contains $\Sing W$.
Proposition~\ref{prop:outsideD} gives the second Oka open $Y\setminus\Sing W$,
which contains $\Sigma_2$.
Hence
\[
Y=(Y\setminus \Sigma_2)\cup(Y\setminus\Sing W)
\]
is a Zariski-open Oka cover.
Kusakabe localization proves the Oka assertion for $Y$.
\end{proof}

\section{The period parameter and the sphere implication}
\label{sec:parameter}

\begin{proof}[Proof of Theorem~\ref{thm:family}]
Fix an admissible $c_0$.
Proposition~\ref{prop:compact} supplies the compact threefold $Y_{c_0}$.
Changing $c_0$ changes only $\beta$ in the period matrix;
the periods on $K=\ker\gamma$,
the monodromy,
the twists and the toroidal fan are unchanged.
At the cusp the change adds a constant to $\beta+\tau$,
so the multipliers in \eqref{eq:Psi} remain holomorphic units.
Thus the unramified covers and the row and node models used above apply to $Y_{c_0}$.
The associated line bundle may vary,
but Theorem~\ref{thm:twisted} applies to every holomorphic line bundle.
Consequently Proposition~\ref{prop:outsideD} and Section~\ref{sec:global} give the same two Zariski-open Oka complements,
and localization proves that $Y_{c_0}$ is Oka.
\end{proof}

The sphere consequence is separate from these Oka proofs.
Under Condition~\ref{cond:sphere},
each $Y_{c_0}$ is diffeomorphic to $S^6$,
as asserted in \cite[Remark~6.4 and Theorem~8.1]{Alpoge} and deduced in \cite[Corollary~2.30]{DGKX}.
Together with Theorem~\ref{thm:family},
transporting these complex structures along diffeomorphisms gives Oka complex structures on $S^6$.

\section{Further Oka properties and questions}
\label{sec:questions}

Du--Guo--Kusakabe--Xie establish several further Oka properties of the family.
Besides the relative Oka-map assertion in \cite[Theorem~1.1]{DGKX},
they prove that deleting or simultaneously blowing up any finite set of distinct points preserves the Oka property \cite[Theorem~1.2]{DGKX}.
Every proper holomorphic deformation family with a marked fibre isomorphic to $Y_{c_0}$ is also an Oka map near that fibre \cite[Corollary~7.16]{DGKX}.
These results are entirely due to them.

Forstneri\v c--L\'arusson prove that every projective Oka manifold is elliptic \cite[Theorem~1.1]{ForstnericLarusson}.
Their theorem does not apply here,
since $a(Y_{c_0})=1<3$ \cite[Theorem~9.1(i)]{Alpoge};
the general compact nonprojective case remains open \cite[Problem~1.2]{ForstnericLarusson}.
We therefore retain the following question from the first version;
it does not assume Condition~\ref{cond:sphere}.

\begin{question}
\label{q:ellipticity}
Is $Y_{c_0}$ Gromov elliptic for every $c_0$ with $\operatorname{Im}c_0<0$?
\end{question}

Fix an admissible $c_0$ and write $Y=Y_{c_0}$.
The final cover in our proof is
\[
V_1=Y\setminus \Sigma_2,\qquad V_2=Y\setminus\Sing W.
\]
We prove that both opens are Oka,
but do not construct a dominating spray on a vector bundle over either entire open.
The explicit sprays occur on surface and incidence models;
passing to the twisted models and row covers uses bundle invariance and repeated localization.
Descending a spray through the unramified covers would additionally require compatible descent data for its vector bundle and spray map.
The covering invariance of the Oka property does not supply these data.
Likewise,
the sprays in Du--Guo--Kusakabe--Xie's relative argument \cite[Section~3.4]{DGKX} have holomorphic maps from convex neighbourhoods as their cores.
They satisfy Kusakabe's criterion for Oka maps \cite[Theorem~1.3]{KusakabeMaps},
without furnishing sprays over the identity maps of the $V_i$.

Even if dominating sprays on both $V_i$ were constructed,
a further extension or gluing argument would be needed to obtain one on a vector bundle over all of $Y$.
The spray bundles and maps need not agree on overlaps,
and a smooth partition of unity does not preserve holomorphicity.
No analogous analytic-Zariski localization theorem is presently available for holomorphic ellipticity or subellipticity \cite[Introduction]{ForstnericLarusson}.
In particular,
sprays on different open sets do not establish subellipticity,
which requires finitely many sprays defined over the whole manifold whose fibre derivatives jointly span its tangent spaces.

There are concrete obstructions to using trivial spray bundles.
Du--Guo--Kusakabe--Xie prove \cite[Theorem~1.3]{DGKX} that
\[
h^0(Y,T_Y)=1.
\]
If $s:Y\times\C^N\to Y$ were a dominating spray,
its parameter derivatives at zero would be global holomorphic vector fields spanning every three-dimensional tangent space,
contrary to this identity;
compare \cite[Proposition~6.2]{ForstnericArakelian}.
The same obstruction holds on $V_2$.
The normal-crossing description of $W$ \cite[Theorem~2.21]{DGKX} implies that $\Sing W$ has complex codimension two in $Y$.
Hartogs extension applied to the local coefficients of vector fields therefore gives
\[
H^0(V_2,TV_2)\cong H^0(Y,T_Y),
\]
so $V_2$ also admits no dominating spray on a trivial bundle.

For $V_1$ there is a direct obstruction from the varying compact torus fibres.
Over a simply connected disc in $B^\circ$,
use the period matrix $\Pi(t)=[Z(t),I_2]$ of \eqref{eq:Pi} with the chosen parameter.
Lift a holomorphic vector field on $V_1$ to
$a(t)\partial_t+b(t,\zeta)\partial_\zeta$ on the vector-space cover of this torus family;
compactness of the fibres makes its horizontal coefficient independent of $\zeta$.
Lattice equivariance gives
\[
b(t,\zeta+\Pi(t)\lambda)
=b(t,\zeta)+a(t)\Pi'(t)\lambda,
\qquad \lambda\in\Z^4.
\]
Thus $\partial_\zeta b$ is lattice-periodic and hence constant in $\zeta$.
The two constant period columns force this matrix to vanish,
and the displayed identity then gives $a(t)Z'(t)=0$.
Since $\tau$ is nonconstant,
$a=0$.
Every global holomorphic vector field on $V_1$ is therefore tangent to its regular torus fibres,
so parameter derivatives of a spray on $V_1\times\C^N$ cannot span the full tangent space.
These arguments exclude trivial spray bundles on $Y$, $V_1$ and $V_2$;
they do not exclude dominating sprays on nontrivial holomorphic vector bundles.

Under Condition~\ref{cond:sphere} and to the author's knowledge,
the family $Y_{c_0}$ with $\operatorname{Im}c_0<0$ represents all currently known complex structures on $S^6$.
They are all Oka by Theorem~\ref{thm:family},
which does not use this condition.
The equivalence criterion \eqref{eq:classification} is part of Theorem~\ref{thm:DGKX},
due to Du--Guo--Kusakabe--Xie;
it classifies members of this family and makes no assertion that the family exhausts all complex structures on $S^6$.

\begin{question}
\label{q:other-structures}
Under Condition~\ref{cond:sphere},
does $S^6$ admit a complex structure not biholomorphic to any $Y_{c_0}$ with $\operatorname{Im}c_0<0$?
If such structures exist,
must they be Oka?
\end{question}

\section*{Acknowledgements}
The author thanks Franc Forstneri\v c for his remarks and advice concerning the exposition of Oka theory,
and Song-Yan Xie for correspondence about the independent work \cite{DGKX}.

\section*{Declarations}

\noindent \textbf{Use of generative artificial intelligence.} OpenAI's Codex was used as an interactive assistant in developing and checking the arguments,
performing calculations,
locating references,
and drafting and revising the manuscript.
The author retains responsibility for the mathematical content and any errors.

\end{document}